\date{}
\newtheorem{theo}{\indent Theorem}[section]
\newtheorem{lemm}[theo]{\indent Lemma}
\newtheorem{coro}[theo]{\indent Corollary}
\newtheorem{prop}[theo]{\indent Proposition}
\newtheorem{defn}[theo]{\indent Definition}
\newtheorem{quest}[theo]{\indent Question}
\newtheorem{rem}[theo]{\indent Remark}
\renewenvironment{proof}
	{\par\indent{\bf Proof.}} 
	{\hfill$\scriptstyle\blacksquare$}
  \def\section{\@startsection{section}{2}%
    {\z@}{.5\linespacing\@plus.7\linespacing}{.5em}%
    {\normalfont\bfseries\centering}}
\def\@secnumfont{\bfseries}
\newcommand{\Z}{\mathbb{Z}}
\newcommand{\Q}{\mathbb{Q}}
\newcommand{\X}{\mathbb{X}}
\newcommand{\R}{\mathbb{R}}
\newcommand{\E}{\mathbb{E}}
\newcommand{\SSS}{\mathbb{S}}
\newcommand{\HH}{\mathbb{H}}
\newcommand{\F}{\mathbb{F}}
\newcommand{\tF}{\widetilde{\mathbb{F}}}
\newcommand{\SL}{\mathrm{SL}}
\newcommand{\OO}{\mathrm{O}}
\newcommand{\OOO}{\mathcal{O} }
\newcommand{\id}{\mathrm{Id} }
\newcommand{\Cyc}{\mathrm{Cyc}}
\newcommand{\PO}{\mathbf{PO}}
\newcommand{\tG}{\widetilde{G}}
\DeclareMathOperator*{\Isom}{Isom}
\newcounter{z}
\title{On faces of quasi-arithmetic Coxeter polytopes}
\author{Nikolay Bogachev}
\address{\parbox{\linewidth}{Skolkovo Institute of Science and Technology, Skolkovo, Russia \\
Laboratory of combinatorial and geometric structures, Moscow Institute of Physics \\ and Technology, Dolgoprudny, Russia \\
Caucasus Mathematical Centre, Adyghe State University, Maikop, Russia}}
\email[]{nvbogach@mail.ru}
\author{Alexander Kolpakov}
\address{\parbox{\linewidth}{Institut de Math\'ematiques, Universit\'e de Neuch\^atel, 2000 Neuch\^atel, Suisse/Switzerland \\
Laboratory of combinatorial and geometric structures, Moscow Institute of Physics \\ and Technology, Dolgoprudny, Russia}}
\email[]{kolpakov.alexander@gmail.com}
\begin{document}

\definecolor{uuuuuu}{rgb}{0.26666666666666666,0.26666666666666666,0.26666666666666666}
\definecolor{qqqqff}{rgb}{0.0,0.0,1.0}

\begin{abstract}
    We prove that each lower-dimensional face of a quasi-arithmetic Coxeter polytope, which happens to be itself a Coxeter polytope, is also quasi-arithmetic. We also provide a sufficient condition for a codimension $1$ face to be actually arithmetic, as well as a few computed examples. 
\end{abstract}

\maketitle

\epigraph{In memoriam \`E. B. Vinberg}{1937 -- 2020}

\section{Introduction}\label{section:intro}

Let $\mathbb{X}^n$ be one of the three spaces of constant curvature, i.e. either the Euclidean $n$-space $\E^n$, or the $n$-dimensional sphere $\SSS^n$, or the $n$-dimensional hyperbolic (Lobachevsky) space $\HH^n$. 

Let $P$ be a convex polytope in $\X^n$. The group $\Gamma$ generated by reflections in the supporting hyperplanes of the facets (i.e. codimension $1$ faces) of $P$ is a \textit{discrete reflection group}, if the images of $P$ under the action of $\Gamma$ tessellate $\mathbb{X}^n$ (i.e. $\mathbb{X}^n$ is entirely covered by copies of $P$, such that their interiors do not overlap).  The polytope $P$ in this case is \textit{the fundamental polytope} for $\Gamma$. In particular, $\Gamma$ is discrete whenever any two hyperplanes $H_i$ and $H_j$ bounding $P$ either do not intersect or form a dihedral angle of $\pi/n_{ij}$, where $n_{ij} \in \Z$, $n_{ij} \geq 2$.

If $P$ is compact, then $\Gamma$ is called
\textit{a cocompact reflection group}, and if $P$ has finite volume (in which case $P$ may or may not be compact),
then $\Gamma$ is called \textit{cofinite} or a discrete group of
\textit{finite covolume}.

Discrete reflection groups of finite covolume acting on spheres and Euclidean spaces were classified by Coxeter in 1933 \cite{Cox34}. Therefore, fundamental polytopes of discrete reflection groups in $\X^n$ are called \textit{Coxeter polytopes}. They belong to the class of so-called \textit{acute-angled polytopes}, i.e. those whose dihedral angles are less than or equal to $\pi/2$.

Suppose that $\mathbb{F} \subset \R$ is a totally real number field, and let $\OOO_\F$ denote its ring of integers. Let $G = \mathbf{PO}(n,1)$ be the isometry group of $\HH^n$, and $\widetilde{G}$ be an \textit{admissible} simple algebraic $\F$-group, i.e. $\widetilde{G}(\R) = G$ and $\tG^\sigma(\R)$ is a compact group for any non-identity embedding $\sigma \colon \mathbb{F} \to \mathbb{R}$. Here $H^\sigma$ denotes the algebraic group defined over $\sigma(\F)$ and obtained from an abstract algebraic group $H$ by applying $\sigma$ to the coefficients of all polynomials that define $H$.

Two subgroups $\Gamma_1 $ and $\Gamma_2$ of $G$ are called \textit{commensurable} if, for some element $g \in G$, the group $\Gamma_1 \cap g \Gamma_2 g^{-1}$ is a subgroup of finite index in each of them.

\begin{defn}[\cite{BHC62}]
If $\Gamma \subset G$ is commensurable with  $\widetilde{G}(\OOO_\F)$ then $\Gamma$ is called \textit{arithmetic}. The field $\F$ is called the \textit{ground field} of $\Gamma$.  
\end{defn}

It is known from the general theory, cf. \cite{BHC62} and \cite{MT62}, that if $\Gamma \subset G$ is commensurable with  $\widetilde{G}(\OOO_\F)$ then $\Gamma$ is a 
\textit{lattice} (i.e. a discrete isometry group with a finite volume fundamental polytope). 

\begin{defn}[\cite{Vin67}]
A lattice $\Gamma \subset G = \widetilde{G}(\R)$ is called \textit{quasi-arithmetic} if  $\Gamma \subset \widetilde{G}(\F)$ and \textit{properly quasi-arithmetic} if it is  quasi-arithmetic, but not actually arithmetic. The field $\F$ is called the ground field of $\Gamma$.  
\end{defn}

The history of discrete reflection groups acting on Lobachevsky spaces and, in  particular, arithmetic reflection groups goes back to the 19th century, to the works of Poincare, Fricke, and Klein. A systematic study was started by Vinberg in 1967 \cite{Vin67}. Namely, he developed practically efficient methods that allow to determine compactness or volume finiteness of a given Coxeter polytope according to its Coxeter diagram, and provided a (quasi-)arithmeticity criterion for hyperbolic reflection groups. Later on, Vinberg created an algorithm (colloquially known as ``Vinberg's algorithm'') \cite{Vin72}, that constructs a fundamental Coxeter polytope for any hyperbolic reflection group. Practically, it is most efficient for arithmetic reflection groups associated with Lorentzian lattices (cf. \textbf{\S}~\ref{sec:prel}, \textbf{\S}~\ref{section:t}, \textbf{\S}~\ref{section:l}).

Due to the further results of Vinberg \cite{Vin84}, Long, Maclachlan, Reid \cite{LMR05}, Agol \cite{Ag06}, Nikulin~\cite{Nik81,Nik07}, Agol, Belolipetsky, Storm and  Whyte \cite{ABSW2008}, it became known that  there are only finitely many maximal  arithmetic hyperbolic reflection groups in all dimensions. Moreover, finite volume Coxeter polytopes do not exist in $\HH^{> 995}$ \cite{Hov86, Pro86}, and compact ones do not exist already in $\mathbb{H}^{> 30}$ \cite{Vin84}.

\begin{defn}
Let $P\subset \HH^n$ be a finite volume hyperbolic Coxeter polytope, and $\Gamma = \Gamma(P)$ be the group generated by reflections in the bounding hyperplanes of $P$. Then $P$ is called (quasi-) arithmetic with ground field $\F$ if it is a fundamental domain for a hyperbolic reflection group $\Gamma \subset \Isom(\HH^n)$, and $\Gamma$ is (quasi-)arithmetic with ground field $\F$.  
\end{defn}

By \cite{Vin84} arithmetic Coxeter polytopes with $\F \ne \Q$ in $\mathbb{H}^n$, can exist only for $n < 30$ (because of compactness), and for $14 \le n < 30$ only finitely many ground fields $\F$ are possible.  Non-compact arithmetic Coxeter polytopes (i.e. for $\F = \Q$) exist only for $n \le 21$, $n \ne 20$ \cite{Ess96}.

Recently, Belolipetsky and Thomson
provided infinitely many commensurability classes of properly
quasi-arithmetic hyperbolic lattices in any dimension $n > 2$ (cf. \cite{BT11, Thom16}), as well as Emery \cite{Em17} proved that the covolume of any quasi-arithmetic hyperbolic
lattice is a
rational multiple of the covolume of an arithmetic subgroup.

A natural question is whether a lower-dimensional face of a Coxeter polytope in $\mathbb{H}^n$ is itself a Coxeter polytope in $\mathbb{H}^k$, $1<k<n$. A good indication for this to be often true is the work of Allcock 
\cite[Theorems 2.1 \& 2.2]{Allcock}. Even earlier, Borcherds \cite{Bor87} used a $25$-dimensional infinite volume polytope due to Conway in order to build a $21$-dimensional Coxeter polytope of finite volume as its face.

In the context of arithmetic reflection groups, it is thus natural to ask if arithmeticity is inherited by lower-dimensional Coxeter faces of an arithmetic Coxeter polytope. As our work shows, this is not the case in general.

However, quasi-arithmeticity does descend to Coxeter faces of all dimensions $k>1$.   

\begin{theo}\label{th-face}
Let $P$ be a quasi-arithmetic Coxeter polytope in $\HH^n$ with ground field $\F$ and $P'$ be its $k$-dimensional face, for $2 \le k \le n-1$. If $P'$ is itself a Coxeter polytope, then $P'$ is also quasi-arithmetic with ground field $\F$.
\end{theo}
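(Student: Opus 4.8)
The plan is to realise both reflection groups inside the orthogonal group of a quadratic form over $\F$, and then to pass from $P$ to $P'$ merely by restricting that form to an $\F$-rational subspace. First I would record the concrete shape of quasi-arithmeticity for a reflection group implicit in Vinberg's criterion \cite{Vin67}: since $\tG$ is an admissible simple $\F$-group with $\tG(\R)=\PO(n,1)$, it is $\PO_f$ for an \emph{admissible} quadratic form $f$ over $\F$, meaning that $f\otimes\R$ (via the identity embedding) has signature $(n,1)$ while $f^\sigma\otimes\R$ is positive definite for every non-identity embedding $\sigma\colon\F\to\R$, and $\Gamma(P)\subset\PO_f(\F)$. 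A reflection belonging to $\PO_f(\F)$ has an $\F$-rational mirror, since its isolated fixed point in $\mathbb{P}(V)$, the image of its $(-1)$-eigenline, is Galois-stable; hence the outer normals $e_1,\dots,e_m$ of the facets of $P$ may be chosen inside $V:=\F^{n+1}$ carrying the bilinear form of $f$, so that $\Gamma(P)=\langle r_{e_1},\dots,r_{e_m}\rangle\subset\OO_f(\F)$.

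Next I would set up the face linear-algebraically. Write $P'=P\cap H_{j_1}\cap\dots\cap H_{j_d}$ with $d=n-k$, where $H_{j_l}$ supports the facet $F_{j_l}$. Because $P$ is acute-angled and $P'$ is a genuine $k$-face meeting $\HH^n$, the Gram submatrix of $e_{j_1},\dots,e_{j_d}$ is positive definite (the standard correspondence between faces and elliptic subdiagrams), so $V_0:=\mathrm{span}_\F\{e_{j_1},\dots,e_{j_d}\}$ is a nondegenerate positive-definite $\F$-subspace of $V$ of dimension $d$. Then $L:=V_0^{\perp}$ (orthogonal complement with respect to $f$) is an $\F$-subspace of dimension $k+1$, and $f':=f|_L$ is a quadratic form over $\F$ with $f'\otimes\R$ of signature $(k,1)$; for $\sigma\ne\mathrm{id}$, the conjugate $(f')^\sigma\otimes\R$, being a subform of the positive-definite form $f^\sigma\otimes\R$, is positive definite. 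Thus $f'$ is admissible over $\F$, and $L\cap\HH^n$ is a copy of $\HH^k$ whose isometry group $\PO_{f'}(\R)$ contains $\Gamma(P')$.

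Finally I would identify $\Gamma(P')$ inside $\OO_{f'}(\F)$. Let $S$ index the facets $F_i\cap P'$ of $P'$. For $i\in S$, the supporting hyperplane of this facet inside $L$ is $L\cap H_i$, and for every $x\in L$ one has $f(x,e_i)=f(x,\bar e_i)$, where $\bar e_i:=\pi_L(e_i)$ is the $f$-orthogonal projection of $e_i$ onto $L$, because $e_i-\bar e_i\in V_0=L^{\perp}$. Since $f$ and $V_0$ are defined over $\F$, the projection $\pi_L\colon V\to L$ is $\F$-linear, whence $\bar e_i\in L(\F)$; moreover $\bar e_i$ is space-like, as $F_i\cap P'$ is a genuine facet of the Coxeter polytope $P'$. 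Therefore the reflection of $L\cap\HH^n$ in $L\cap H_i$ is $r_{\bar e_i}|_L\in\OO_{f'}(\F)$, and $\Gamma(P')=\langle r_{\bar e_i}|_L:i\in S\rangle\subset\OO_{f'}(\F)\subset\PO_{f'}(\F)$. As $P'$ is a finite-volume Coxeter polytope, $\Gamma(P')$ is a lattice in $\PO_{f'}(\R)=\Isom(\HH^k)$ sitting inside $\PO_{f'}(\F)$ with $\PO_{f'}$ admissible over $\F$, so by definition $P'$ is quasi-arithmetic with ground field $\F$.

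I expect the only real obstacle to be expository: making the passage in the first paragraph — between the abstract ``admissible $\F$-group'' definition of quasi-arithmeticity and the concrete picture of $\Gamma(P)\subset\OO_f(\F)$ for an admissible quadratic form $f$ over $\F$ with $\F$-rational mirrors — fully precise and correctly attributed. After that, the argument is just the observation that admissibility of a quadratic form, $\F$-rationality of vectors, and ``the relevant reflections generate the face group'' all survive restriction to the $\F$-rational subspace $L=V_0^{\perp}$, the single geometric input being the classical fact that a face of an acute-angled polytope is cut out by a positive-definite set of facet normals.
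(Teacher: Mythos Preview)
Your argument is correct and takes a genuinely different route from the paper's. The paper works entirely through the Gram-matrix form of Vinberg's criterion (Theorem~\ref{V}): it computes the entries of $G(P')$ for a codimension-$1$ face explicitly in terms of $G(P)$ (Lemma~\ref{lemma:projections}), compares principal minors under Galois conjugation (Lemma~\ref{lemma:Gram}), proves $\F(P')=\F(P)$ by expanding cyclic products (Lemma~\ref{lemma:field-inclusion}), verifies \textbf{(V1)}--\textbf{(V2)} for a single facet (Proposition~\ref{quasi-facet:acute}), and then descends along a chain $P'=P_1\subset\cdots\subset P_t=P$ of codimension-$1$ inclusions. Because the intermediate $P_j$ need not be Coxeter, the paper must first extend the notion of quasi-arithmeticity to arbitrary acute-angled polytopes (Definition~\ref{def-quasi-arithm}) to make that induction go through.

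You sidestep all of this by using the structural description the paper also records (the remark citing \cite[Lemma~7]{Vin67}): once $\Gamma(P)\subset\OO_f(\F)$ for an admissible Lorentzian form $f$, restricting $f$ to the $\F$-rational orthogonal complement $L=V_0^{\perp}$ of the span of the normals cutting out $P'$ immediately gives an admissible form $f'$ over $\F$, and the $\F$-linear projection sends the relevant $e_i$ to $\F$-rational normals $\bar e_i$ witnessing $\Gamma(P')\subset\OO_{f'}(\F)$. This is more conceptual, handles all codimensions in one step, and avoids both the facet-by-facet induction and the acute-angled extension. What the paper's computational approach buys in return is the explicit formula for the entries of $G(P')$ and the cyclic-product bookkeeping of Lemma~\ref{lemma:field-inclusion}; those are precisely what get reused in the proof of Theorem~\ref{th-facet-arithm}, where one must track the denominators $1-g_{0i}^2=\sin^2\frac{\pi}{2m_i}$ and verify the integrality condition \textbf{(V3)}, something your restriction argument does not by itself supply.
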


In the sequel, we enhance the definitions of ground fields and  quasi-arithmeticity to all acute-angled polytopes, cf. Definition \ref{def-quasi-arithm} and the preceding discussion. Then, we have the following easy consequence. 

\begin{coro}\label{cor}
Let $P$ be a Coxeter polytope in $\HH^n$ with ground field $\F = \F(P)$, and let $P'$ be a codimension $k \geq 1$ face of $P$ with ground field $\F' = \F(P')$. If $|\F:\F'| > 1$, then $P$ cannot be quasi-arithmetic. \end{coro}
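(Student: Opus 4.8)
The corollary is the contrapositive of Theorem~\ref{th-face}, read through the extension of the ground-field notion to arbitrary acute-angled polytopes supplied by Definition~\ref{def-quasi-arithm}; recall that for such a polytope $Q$ one sets $\F(Q)$ to be the subfield of $\R$ generated by all cyclic products $g_{i_1i_2}g_{i_2i_3}\cdots g_{i_ri_1}$ of the entries of the Gram matrix $G(Q)=(g_{ij})$ of the outward unit facet normals of $Q$. The plan is therefore to prove: if $P$ is quasi-arithmetic with ground field $\F$, then $\F(P')=\F$ for every face $P'$ with $\dim P'\ge 2$; the corollary then follows by contraposition, and the hypothesis $|\F:\F'|>1$ is meaningful and amounts to $\F'\ne\F$ because, as the first step below shows, $\F'\subseteq\F$ always. (For $\dim P'\le 1$ the field of cyclic products need not be a field of definition, so the statement is read with $1\le k\le n-2$.)

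The first step, which needs no arithmeticity assumption and holds for every face, is the inclusion $\F'=\F(P')\subseteq\F(P)=\F$. If $P'=P\cap H_{i_1}\cap\dots\cap H_{i_m}$, then $P'$ spans the totally geodesic subspace $L=H_{i_1}\cap\dots\cap H_{i_m}$, and its facets are cut out by the orthogonal projections $\pi_L(e_j)$ onto $L$ of the remaining outward normals $e_j$ of $P$. Thus each $\langle\pi_L(e_j),\pi_L(e_l)\rangle$ and each $\langle\pi_L(e_j),\pi_L(e_j)\rangle$ is a rational expression in the entries of $G(P)$ (through the inverse Gram matrix of $e_{i_1},\dots,e_{i_m}$); and in any cyclic product for $G(P')$ the normalizing factor attached to each index occurs exactly twice, so the square roots cancel and the product becomes a rational expression in the cyclic products of $G(P)$, hence an element of $\F$. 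This is the computation underlying the proof of Theorem~\ref{th-face}.

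The second, substantive step assumes $P$ quasi-arithmetic over $\F$ and produces the reverse inclusion. The $\F$-rational form $f$ of signature $(n,1)$ carrying $P$, whose non-identity Galois conjugates are all definite, restricts on the $\F$-span $L$ of $P'$ to an $\F$-form $f|_L$ of signature $(\dim P',1)$ with all non-identity conjugates definite, and the normals $\pi_L(e_j)$ are $\F$-rational; so $\Gamma(P')$ is carried by a quasi-arithmetic form over the totally real field $\F$, indefinite at exactly one archimedean place. Such a reflection group cannot be defined over a proper subfield $\F_0\subsetneq\F$: since $\F$ is totally real, every archimedean place of $\F_0$ splits completely in $\F$, so a model of $f|_L$ over $\F_0$ would be indefinite at all $[\F:\F_0]\ge 2$ places of $\F$ lying over the identity place of $\F_0$ — impossible, as $f|_L$ is indefinite at only one place. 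Hence $\F$ is the minimal field of definition of $\Gamma(P')$, which for $\dim P'\ge 2$ equals the field of cyclic products of $G(P')$ by Vinberg's lemma; that is, $\F'=\F$. With the first step this gives $|\F:\F'|=1$, contradicting $|\F:\F'|>1$, and proves the corollary. Note that the Coxeter hypothesis on $P'$ is never used; its only function in Theorem~\ref{th-face} is to make ``$P'$ is quasi-arithmetic'' meaningful in the classical sense, and that function is now discharged by Definition~\ref{def-quasi-arithm}.

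The main obstacle is the equality in the second step — getting $\F(P')=\F(P)$ rather than the cheap inclusion $\F(P')\subseteq\F(P)$ — which rests on the ``no descent over a totally real field'' observation together with Vinberg's identification of the minimal field of definition with the field of cyclic products; the remaining work is the routine projection book-keeping of the first step.
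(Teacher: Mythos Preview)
Your proof is correct, and at the top level it agrees with the paper: the corollary is simply Theorem~\ref{th-face} (more precisely its acute-angled strengthening, Proposition~\ref{quasi-facet:acute}, iterated down a chain of facets) read in the contrapositive, and the paper offers no separate argument beyond calling it an ``easy consequence''.

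Where you do something genuinely different is in re-deriving the equality $\F(P')=\F(P)$. The paper's Lemma~\ref{lemma:field-inclusion} stays entirely at the Gram-matrix level: after the combinatorial check that cyclic products of $G(P')$ expand as $\F$-rational combinations of cyclic products of $G(P)$, it takes a non-identity embedding $\sigma$ of $\F$ fixing $\F'$, invokes Lemma~\ref{lemma:Gram} to push the positive semi-definiteness of $G(P)^\sigma$ down to $G(P')^\sigma$, and reaches a contradiction because the principal minors of $G(P')$ already lie in $\F'$ and are hence $\sigma$-fixed. You instead pass to Vinberg's $\F$-rational model of the ambient quadratic form, restrict it to the $\F$-span of the face, and rule out descent to a proper subfield by the archimedean-place count, finally identifying the minimal field of definition with the cyclic-product field via Vinberg's lemma. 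Your route is more conceptual and explains \emph{why} the ground field cannot drop, but it imports more machinery: one needs (i) that the facet normals of a quasi-arithmetic Coxeter polytope can be chosen $\F$-rational (implicit in \cite[Lemma~7]{Vin67}), and (ii) that a Zariski-dense reflection group determines its form up to scalars, so that an $\F_0$-model of $\Gamma(P')$ genuinely yields an $\F_0$-model of $f|_L$ --- a step you slide over. The paper's argument avoids both issues by never leaving the Gram matrix, at the cost of the explicit minor computation in Lemma~\ref{lemma:Gram}.

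Your restriction to $1\le k\le n-2$ (i.e.\ $\dim P'\ge 2$) is the right reading; for $\dim P'\le 1$ neither Zariski density nor the interpretation of $\F(P')$ as a field of definition survives.
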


In \textbf{\S}~\ref{example:prism}, the above property is used as a quick test for polytopes that are not quasi-arithmetic.

The following result shows that arithmetic polytopes can often have arithmetic Coxeter facets. 

\begin{theo}\label{th-facet-arithm}
Let $P$ be an arithmetic Coxeter polytope in $\HH^n$ with ground field $\F$ and $P'$ be its facet (i.e. a codimension $1$ face). Moreover, assume that $P'$ meets all its adjacent facets at dihedral angles of the form $\frac{\pi}{2m}$, for some natural $m\geq 1$, not all necessarily equal to each other.  Then $P'$ is itself an arithmetic Coxeter polytope with ground field $\F$.
\end{theo}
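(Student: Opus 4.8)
The plan is to realize $P$ via an admissible quadratic form over $\F$, extract the sublattice fixing the facet $P'$, and then check that the arithmeticity criterion (Vinberg's criterion, as stated for $P$) still holds for $P'$ with the same ground field $\F$. First I would set up notation: let $f$ be the admissible $\F$-quadratic form of signature $(n,1)$ associated with the arithmetic group $\Gamma = \Gamma(P)$, let $e_1,\dots,e_N \in \F^{n+1}$ be outward unit normals to the facets of $P$ (normalized so that $\langle e_i, e_i\rangle = 1$ in the bilinear form $B_f$), and let $G = \mathrm{Gram}(P) = (\langle e_i, e_j\rangle)_{i,j}$ be the Gram matrix with entries in $\F$. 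Arithmeticity of $P$ is equivalent, by Vinberg's criterion, to all cyclic products $\langle e_{i_1}, e_{i_2}\rangle\langle e_{i_2}, e_{i_3}\rangle \cdots \langle e_{i_k}, e_{i_1}\rangle$ lying in $\OOO_\F$, together with the field-of-definition condition. By Theorem~\ref{th-face}, $P'$ is already quasi-arithmetic with ground field $\F$, so the field condition is automatic; what remains is to upgrade "quasi-arithmetic" to "arithmetic", i.e.\ to verify the integrality of cyclic products for the facets of $P'$.

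The key geometric input is that the facets of $P'$ are exactly the intersections $P' \cap H_i$, where $H_i$ ranges over the facets of $P$ adjacent to $P'$. If $e_0$ is the unit normal to $P'$ and $e_i$ (for $i$ in the adjacent set) are the unit normals to those neighboring facets, then the outward unit normal $v_i$ to the corresponding facet of $P'$, computed inside the hyperplane $e_0^\perp$, is the normalized projection $v_i = (e_i - \langle e_i, e_0\rangle e_0)/\| \cdot \|$. The hypothesis that every dihedral angle between $P'$ and its neighbors is $\pi/(2m)$ means $\langle e_i, e_0\rangle = -\cos(\pi/(2m))$. The crucial point is that $2\cos(\pi/(2m))$ is an algebraic integer in $\OOO_\F$ and, more to the point, $4\cos^2(\pi/(2m)) = 2 + 2\cos(\pi/m)$, so $\|e_i - \langle e_i,e_0\rangle e_0\|^2 = 1 - \cos^2(\pi/(2m)) = \tfrac12(1 - \cos(\pi/m)) = \sin^2(\pi/(2m))$. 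Then a cyclic product of the $v_i$'s over the Gram matrix of $P'$ expands into a sum of cyclic products of the $e_i$'s (the projection introduces extra $e_0$-terms, but each such term is again a cyclic product in the $e$'s) divided by a product of the norming factors $\sin(\pi/(2m_{i}))$. The numerator is a sum of cyclic products of $P$, hence lies in $\OOO_\F$ by arithmeticity of $P$; the subtlety is the denominator.

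Thus the main obstacle — and the place where the hypothesis "$\pi/(2m)$" rather than a general $\pi/m$ is doing real work — is showing that the $\sin(\pi/(2m))$ factors in the denominators cancel out, leaving an algebraic integer. The way I would handle this is to avoid unit normals altogether for $P'$: instead of normalizing $v_i$ to have norm $1$, I would rescale by a common $\OOO_\F$-factor. Concretely, multiply each projected normal by $2$ to get $w_i = 2e_i - 2\langle e_i, e_0 \rangle e_0 = 2e_i + (2\cos(\pi/(2m_i))) e_0$; since $2\cos(\pi/(2m))\in\OOO_\F$, this $w_i$ has all coordinates in the relevant $\OOO_\F$-lattice, and $\langle w_i, w_i\rangle = 4\sin^2(\pi/(2m_i)) = 2 - 2\cos(\pi/m_i) \in \OOO_\F$. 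Working with the scaled Gram matrix $(\langle w_i, w_j\rangle)$, one checks that Vinberg's integrality condition for $P'$ reduces, after clearing these integer norms, to the integrality of cyclic products of the original $e_i$'s and $e_0$ over $f$ — which holds because $P$ is arithmetic. Combined with Theorem~\ref{th-face} (which pins the ground field to $\F$ and shows $P'$ is quasi-arithmetic, hence gives the commensurability with a group over $\F$), this yields that $P'$ is arithmetic with ground field $\F$. I would close by remarking that the hypothesis cannot be dropped: for odd denominators $\cos(\pi/m)$ need not behave well under the halving, and indeed our examples show arithmeticity can fail for Coxeter facets in general.
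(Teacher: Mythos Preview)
Your overall plan coincides with the paper's: reduce to checking \textbf{(V3)} for $P'$ after invoking Theorem~\ref{th-face} for quasi-arithmeticity and the ground field. You also correctly isolate the only real difficulty, namely the product $\prod_k \sin^{-2}(\pi/(2m_{i_k}))$ that appears as the denominator when you pass from the Gram matrix of $P$ to that of $P'$. The gap is that your proposed fix does not actually dispatch this denominator.

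Concretely, you replace the unit normals of $P'$ by $w_i = 2e_i + (2\cos\tfrac{\pi}{2m_i})\,e_0$ and observe that $\langle w_i,w_i\rangle = 4\sin^2(\pi/(2m_i)) = 2-2\cos(\pi/m_i)\in\OOO_\F$. But this is the wrong direction: to verify \textbf{(V3)} for $P'$ you must show that
\[
2^{s}\,g'_{i_1 i_2}\cdots g'_{i_s i_1}
\;=\;
\frac{\prod_k \langle w_{i_k},w_{i_{k+1}}\rangle}{\prod_k \langle w_{i_k},w_{i_k}\rangle}\cdot 2^{s}
\]
lies in $\OOO_\F$, so you need the \emph{reciprocals} $\langle w_{i_k},w_{i_k}\rangle^{-1}$ to combine with the available powers of $2$ into algebraic integers, not merely that $\langle w_{i_k},w_{i_k}\rangle\in\OOO_\F$. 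The paper supplies exactly this missing ingredient as a separate lemma: $\frac{1}{2}\sin^{-2}(\pi/(2m))$ is an algebraic integer for every $m\ge 2$ (proved via Chebyshev polynomials), equivalently $2/\langle w_i,w_i\rangle\in\OOO_\F$. With that lemma in hand, one still has to expand the numerator $\prod_k(g_{i_k i_{k+1}}-g_{0i_k}g_{0i_{k+1}})$ into cyclic products of $G(P)$ and count that the surviving power of $2$ is at least the length of each resulting cycle, so that \textbf{(V3)} for $P$ applies termwise; this counting (distinguishing the indices with $m_{i_k}=1$) is also carried out explicitly in the paper. Your sentence ``one checks that Vinberg's integrality condition for $P'$ reduces, after clearing these integer norms, to the integrality of cyclic products of the original $e_i$'s'' is precisely where both of these steps are hidden.

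Two smaller issues: the assertion that $w_i$ ``has all coordinates in the relevant $\OOO_\F$-lattice'' is not justified, since your $e_i$ are declared to be \emph{unit} normals rather than root vectors of a lattice (and the Gram entries $g_{ij}$ lie in $\widetilde\F$, not in $\F$, in general). Also, you have not argued that $P'$ is a Coxeter polytope; the paper uses the evenness hypothesis here too, observing that in the order-$4m$ dihedral stabiliser of each ridge there is a mirror orthogonal to $H_{e_0}$, so the facets of $P'$ are themselves mirrors of the ambient reflection group.
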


In \textbf{\S}~\ref{section:t}, we give an explicit example shows that once some dihedral angles around $P'$ in the above theorem are not ``even'' (i.e. of the form $\frac{\pi}{2m}$, $m\geq 1$) then $P'$ may be \textit{properly} quasi-arithmetic. 

Another approach would be to descend arithmeticity to reflection centralisers and normalisers in the spirit of Shvartsman's work on \textit{complex} reflection groups \cite{Sch}.

The paper is organised as follows. In \textbf{\S}~\ref{sec:prel} some preliminary facts are given. Then, \textbf{\S}~\ref{sec:th1} is devoted to the proof of Theorem~\ref{th-face} and \textbf{\S}~\ref{sec:th2} is devoted to the proof of Theorem~\ref{th-facet-arithm}.

At the end of the paper, some computed examples are provided (cf. \textbf{\S}~\ref{section:t}--\textbf{\S}~\ref{section:l})
in order to illustrate the above theorems, as well as to enrich the collection of known Coxeter polytopes. All computations were performed by using \texttt{SageMath} computer algebra system \cite{sagemath}.  

The closing remarks (\textbf{\S}~\ref{sec:open}) contain a number of questions that we find quite enticing, although likely hard to approach.

\subsection*{Acknowledgements.} The authors are grateful to \`E.\,B.~Vinberg and D.~Allcock for their encouragement and helpful remarks. The authors would also like to thank the anonymous referees for their comments and suggestions. N.B. is thankful to Institut des Hautes \'Etudes Scientifiques --- IHES, and especially to Fanny Kassel, for their hospitality while this work was carried out.

\subsection*{Funding.} The work of A.K. was supported, in part, by Swiss National Science Foundation [project no. PP00P2-170560] and, in part, by Russian Federation Government [grant no. 075-15-2019-1926]. The work of N.B. on theoretical aspects (\textbf{\S}~\ref{sec:th1} and \textbf{\S}~\ref{sec:th2}) was supported by Russian Federation Government [grant no. 075-15-2019-1926]. The work of N.B. on computational aspects (\textbf{\S}~\ref{section:t} and \textbf{\S}~\ref{section:l}) was partially supported by Russian Science Foundation [project no. 18-71-00153].

\section{Preliminaries}\label{sec:prel}
Let $\mathbb{F} \subset \R$ be a totally real number field, and let $\OOO_\F$ denote its ring of integers.

\begin{defn}
A free finitely generated $\OOO_\F$--module $L$ with an inner product of signature $(n,1)$ is called a $\textit{Lorentzian lattice}$ if, for any non-identity embedding $\sigma \colon \mathbb{F} \to \mathbb{R}$, the quadratic space $L \otimes_{\sigma(\OOO_\F)} \mathbb{R}$ is positive definite (we say that the inner product in $L$ is associated with some admissible Lorentzian quadratic form).
\end{defn}

\begin{rem}
As a matter of terminology, Lorentzian lattices, as defined in the present paper, are also called hyperbolic lattices, cf. \cite{BP18,Bog19,Bog19a,Vin84}.
\end{rem}

Let $L$ be a Lorentzian lattice. Then the vector space $\E^{n, 1} = L \otimes_{\id(\OOO_\F)} \R$ is identified with the $(n + 1)$-dimensional real  \textit{Minkowski space}.
The group $\Gamma = \mathcal{O}'(L)$ of integer (i.e. with coefficients in $\OOO_\F$) linear transformations
preserving the lattice $L$ and mapping each connected
component of the cone 
$$\mathfrak{C} = \{v \in \E^{n,1} \mid (v, v) < 0\} = \mathfrak{C}^+ \cup \mathfrak{C}^-$$
onto itself is a discrete group of motions of the Lobachevsky space $\HH^n$. Here and below we use the \textit{hyperboloid model} for
$\HH^n$, which is the set $\{v \in \E^{n,1} \cap \mathfrak{C}^+  \mid (v, v) = -1\}$.
Its isometry group is $$\Isom(\HH^n) = \PO(n,1) \simeq \OO(L \otimes_{\id(\OOO_\F)} \R)/\{\pm \id\},$$ which is the
group of orthogonal transformations of the Minkowski space $\E^{n, 1}$ that leaves $\mathfrak{C}^+$ invariant.

It is known \cite{BHC62, MT62} that if $\F = \Q$ and the lattice $L$ is isotropic (i.e. the quadratic form  associated with it
represents zero), then the quotient space $\HH^n/\Gamma$ (equivalently, the fundamental domain of $\Gamma$) is non-compact, but of finite volume, and in all other cases it is compact. The case $\F = \Q$ was first studied by Venkov \cite{Ven37}.

\begin{defn}
Any group $\Gamma$ obtained in the way described above, and any subgroup of
$\Isom(\HH^n)$ commensurable to such one, is called an \textit{arithmetic group (or lattice) of simplest type}. The field $\F$ is called the ground field (or the field of definition) of $\Gamma$.
\end{defn}

A primitive vector
$e$ of a Lorentzian lattice $L$ is called a \textit{root} or, more precisely,
a \textit{$k$-root},
where $k = (e, e) \in (\OOO_\F)_{>0}$ if
$2\cdot (e, x) \in k \OOO_\F$ for all $x \in L$. Every root $e$ defines an \textit{orthogonal reflection} (called
a \textit{$k$-reflection} if
$(e, e) = k$) in $L \otimes_{\id(\OOO_\F)} \mathbb{R}$ given by
$$
\mathcal {R}_e: x \mapsto x - 2 \cdot \frac{(e, x)} {(e, e)} \cdot e,
$$
which preserves the lattice $L$. Geometrically speaking, this is a reflection of $\mathbb{H}^n$ with respect to the hyperplane $H_e = \{x \in \mathbb {H} ^ n \mid (x, e) = 0 \},$ called the \textit{mirror} of $\mathcal{R}_e$.

Let $\OOO_r (L)$ denote the subgroup of $\OOO'(L)$ generated by all reflections contained in it.

\begin{defn}
A Lorentzian lattice $L$ is called \textit{reflective} if the index $[\OOO'(L): \OOO_r(L)]$ is finite.
\end{defn}
Clearly, $L$ is reflective if and only if the group $\OOO_r(L)$ has a finite volume fundamental Coxeter polytope.

The results described in \textbf{\S}~\ref{section:intro} give hope that all reflective Lorentzian lattices, as  well as maximal arithmetic hyperbolic reflection groups, can be classified.
For more information about the recent progress on the classification problem cf. 
\cite{Bel16, Bog19, Bog19a}.

\begin{rem}
By a result of Vinberg \cite[Lemma 7]{Vin67}, any arithmetic hyperbolic reflection group is an arithmetic lattice of simplest type with ground field $\F$, and therefore is commensurable with $\OOO_r(L)$, where $L$ is some (necessarily reflective) Lorentzian lattice over a totally real number field~$\F$. 

The same result shows that every 
quasi-arithmetic hyperbolic reflection 
group is contained in a group $\OO(f, 
\F)$, where $f$ is some admissible 
Lorentzian form over $\F$.
\end{rem}

\begin{rem}
As a consequence of Vinberg's algorithm \cite{Vin72} (its recent software implementations are \texttt{AlVin} \cite{Guglielmetti2}, for Lorentzian lattices with an orthogonal basis over several ground fields, and \texttt{VinAl} \cite{VinAlg2017, BP18}, for Lorentzian lattices with an arbitrary basis over $\Q$), reflective lattices provide many explicit examples of arithmetic reflection groups and arithmetic Coxeter polytopes. In contrast, finding properly quasi-arithmetic reflection groups appears to be a harder task. 
\end{rem}

\begin{rem}
The record example of a compact Coxeter polytope was found by Bugaenko
in dimension $8$ \cite{Bug92}, although the maximal possible dimension is bounded
by $30$.

The record example of a finite volume Coxeter polytope is due to Borcherds
in dimension $21$ \cite{Bor87}. It is known that Coxeter polytopes of finite volume
can exist only in dimensions smaller than $996$ \cite{Hov86, Pro86}.

It is worth mentioning that both these examples come from arithmetic reflection groups.
\end{rem}
 
For every hyperplane $H_e$ with unit normal $e$, let us set $H_e^- = \{x \in \E^{n,1} \mid (x,e) \le 0\}$. If 
$$P = \bigcap_{j=0}^N H_{e_j}^-$$ is an acute-angled polytope of finite volume in $\HH^n$, then $G(P) = G(e_0, \ldots, e_N)$ is its Gram matrix and $\tF(P) = \Q(\{g_{ij}\}^N_{i,j=0})$. For a given matrix $A = \{a_{ij}\}^N_{i,j=1}$, let $\Cyc(A)$ denote the set of its cyclic products, i.e. all possible products of the form $a_{i_1 i_2} a_{i_2 i_3} \ldots a_{i_k i_1}$. Let $\F(P) = \Q(\Cyc(G(P))) \subset \tF(P)$. The field $\F(P)$ is called the \textit{ground field} of $P$. 

Given a finite covolume hyperbolic reflection group $\Gamma$, the following criterion allows us to determine if $\Gamma$ is arithmetic, quasi-arithmetic, or neither.

\begin{theo}[Vinberg's arithmeticity criterion \cite{Vin67}]\label{V}
Let $\Gamma$ be a cofinite reflection group acting on $\HH^n$ with fundamental Coxeter polytope $P$. Then $\Gamma$ is arithmetic if and only if
the following conditions hold:
\begin{itemize}
    \item[{\bf(V1)}] $\tF(P)$ is a totally real algebraic number field;
    \item[{\bf(V2)}] for any embedding $\sigma \colon \widetilde\F(P) \to \R$, such that $\sigma\!\mid_{\F(P)} \ne \id$, $G^\sigma(P)$ is positive semi-definite\footnote{Here, a square $(n\times n)$-matrix $M$ is positive semi-definite if $x^t M x \geq 0$ for all $x\in \mathbb{R}^n$.};
    \item[{\bf(V3)}] $\Cyc(2 \cdot G(P)) \subset \OOO_{\F(P)}$.
\end{itemize}
A cofinite reflection group $\Gamma$ acting on $\HH^n$ with fundamental polytope $P$ is  quasi-arithmetic if and only if it satisfies conditions \textbf{(V1)}--\textbf{(V2)}, but not necessarily \textbf{(V3)}.
\end{theo}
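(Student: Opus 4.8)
The plan is to match the three numerical conditions (V1)--(V3) against the three structural features of an arithmetic lattice of simplest type: a totally real ground field, compactness of the Galois twists at every non-identity archimedean place (admissibility), and integrality. I normalise the outward facet normals so that $(e_i,e_i)=1$; then $g_{ii}=1$, the generators read $\mathcal{R}_{e_i}(x)=x-2(e_i,x)e_i$, and since $P$ has finite volume the $e_j$ span $\E^{n,1}$. As $\Gamma$ is a lattice in $\PO(n,1)$, by the Borel density theorem it is Zariski dense in $\SO(n,1)$; in particular every $\Gamma$-invariant symmetric bilinear form is a scalar multiple of the Minkowski form---a uniqueness I will use repeatedly.

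First I would pin down the field of definition. In the root basis the matrix of $\mathcal{R}_{e_i}$ is $\Z$-linear in the numbers $2g_{ik}$, so every word in the generators has entries that are integer polynomials in the $2g_{ij}$. A diagonal conjugation by $T=\mathrm{diag}(t_0,\dots,t_N)$ sends $g_{ij}\mapsto t_i t_j^{-1}g_{ij}$ and fixes every cyclic product $g_{i_1 i_2}g_{i_2 i_3}\cdots g_{i_k i_1}$, because each index contributes one factor $t_{i_m}$ and one factor $t_{i_m}^{-1}$. Choosing a spanning tree of the Coxeter graph (connected, by the irreducibility noted above) and propagating the $t_i$ along it, I can force all tree entries into $\F(P)=\Q(\Cyc(G))$, and the cyclic-product invariants then drag the remaining entries into $\F(P)$ as well, so $T\Gamma T^{-1}\subset \GL_{n+1}(\F(P))$. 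To descend the invariant form to $\F(P)$ I would combine uniqueness up to scalar with Hilbert's Theorem 90: for any $\tau$ fixing $\F(P)$ the form $\tau(\tilde G)$ is again invariant, whence $\tau(\tilde G)=\lambda_\tau\tilde G$, and the multiplicative cocycle $\lambda_\tau$ is a coboundary, so after rescaling there is an $\F(P)$-rational invariant form $f$. This realises $\Gamma$ inside $\OO(f,\F(P))$ and identifies $\F(P)$ as the ground field.

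With $f$ over $\F(P)$ in hand the remaining conditions read off directly. Condition (V1) is the demand that the ambient field be totally real, without which $\OOO_{\F(P)}$ cannot embed as a lattice. Condition (V2) says exactly that at every real place of $\F(P)$ other than the identity (equivalently, every embedding $\sigma$ of $\tF(P)$ with $\sigma|_{\F(P)}\ne\id$; embeddings trivial on $\F(P)$ are gauge-equivalent and hence inert) the conjugate form $G^\sigma$ is positive semidefinite, i.e. the twisted group $\tG^\sigma(\R)$ is compact. Thus (V1)--(V2) are equivalent to $\Gamma\subset\OO(f,\F(P))$ with $f$ admissible, which is the definition of quasi-arithmeticity, proving the last sentence of the theorem. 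For full arithmeticity I would upgrade ``defined over $\F(P)$'' to ``integral'': I must exhibit a $\Gamma$-stable $\OOO_{\F(P)}$-lattice, after which Borel--Harish-Chandra \cite{BHC62} makes $\OO(f,\OOO_{\F(P)})$ a lattice commensurable with $\Gamma$. Since the reflection matrices are $\Z$-linear in the $2g_{ij}$, the gauge-invariant obstruction to stabilising the rescaled root lattice over $\OOO_{\F(P)}$ is precisely that the cyclic products of the entries of $2G$ be algebraic integers, that is $\Cyc(2G)\subset\OOO_{\F(P)}$, which is (V3). The converse directions are the same bookkeeping run backwards: an arithmetic $\Gamma$ is commensurable with $\tG(\OOO_\F)$ for an admissible $\tG$ over a totally real $\F$, and matching $\F$ with the trace field $\F(P)$ returns (V1), compactness at the remaining places returns (V2), and the preserved integral lattice returns (V3).

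The step I expect to be the genuine difficulty is the descent of the invariant form---not merely the matrix entries---to $\F(P)$: the naive diagonal conjugation that lands the group in $\GL_{n+1}(\F(P))$ moves the Gram matrix to $(T^{-1})^{t}GT^{-1}$, whose entries need not lie in $\F(P)$, and it is only uniqueness up to scalar (via Zariski density) together with Hilbert 90 that rescue an $\F(P)$-rational form. The parallel subtlety on the integral side is to check that $\Cyc(2G)\subset\OOO_{\F(P)}$ is exactly---neither more nor less than---the condition that some gauge makes every rescaled entry $2\tilde g_{ij}$ integral, so that the root lattice is genuinely $\Gamma$-stable; disentangling the gauge freedom from the arithmetic content is where the care lies, while the appeals to Borel density, Hilbert 90, and Borel--Harish-Chandra are otherwise formal.
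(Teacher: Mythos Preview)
The paper does not prove this theorem: it is quoted as Vinberg's arithmeticity criterion with a citation to \cite{Vin67}, and is used throughout as a black box. There is therefore no ``paper's own proof'' to compare your proposal against.

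That said, your sketch is broadly faithful to Vinberg's original argument. The diagonal gauge trick along a spanning tree of the Coxeter graph is exactly how one shows that $\F(P)=\Q(\Cyc(G))$ is the minimal field of definition for the group (this is essentially \cite[Lemma~7]{Vin67}), and the identification of (V3) as the gauge-invariant integrality obstruction is correct. Two points deserve more care. First, your reading of (V2) as ``the twisted group $\tG^\sigma(\R)$ is compact'' glosses over the fact that $G(P)$ is an $(N+1)\times(N+1)$ matrix with $N$ typically larger than $n$, so it is degenerate; the content of (V2) is that after applying $\sigma$ the induced form on the span of the (conjugated) normals becomes positive \emph{definite}, and ``semi-definite'' on the full Gram matrix is the correct way to phrase this. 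Second, the Hilbert~90 step for descending the invariant form to $\F(P)$ is the right idea, but you should be explicit that the cocycle $\lambda_\tau$ lands in $\tF(P)^\times$ and that rescaling by the resulting coboundary keeps the diagonal entries equal to $1$ only up to a global scalar; Vinberg handles this by working directly with the scaled vectors rather than invoking Galois cohomology, which is slightly more elementary but amounts to the same thing.
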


Obviously, every arithmetic reflection group is quasi-arithmetic. For most of our arguments, we shall need a wider definition of quasi-arithmetic polytopes that will allow us to capture their main geometric and algebraic properties without paying attention to whether their dihedral angles are of Coxeter type. This will become important in order to transfer from higher-dimensional faces to lower-dimensional ones in a chain of inclusions, which does not necessarily consist entirely of Coxeter polytopes, cf. Proposition~\ref{quasi-facet:acute}. 

\begin{defn}\label{def-quasi-arithm}
Let $P$ be a finite volume acute-angled polytope in $\HH^n$, and let it have well-defined fields $\tF = \tF(P)$,  $\F = \F(P) \subset \tF$, as described above. Then $P$ is called quasi-arithmetic if $\tF$ and $\F$ satisfy conditions \textbf{(V1)}--\textbf{(V2)}.
\end{defn}

After presenting the proof of Theorem~\ref{th-face}, we provide some computed examples of Coxeter polytopes and their faces. For each polytope $P \subset \mathbb{H}^n$, let its \textit{facet tree} $\mathcal{T}(P)$ be a rooted tree with root $P$, where each vertex of level $0 \leq i \leq n-3$ represents the isometry type of a codimension $i$ face $P^{(i)}_j$ of $P$, while the level $i+1$ descendants of $P^{(i)}_j$ represent all distinct isometry types of codimension $1$ faces of $P^{(i)}_j$. Thus, $\mathcal{T}(P)$ shows all possible isometry types of faces of $P$ (in codimensions $1$ through $n-2$), as well as the set of mutually non-isometric facets for each lower-dimensional face. Since $\mathcal{T}(P)$ does not take face adjacency into account, it represents only some geometric and combinatorial features of $P$. Mostly, we shall be interested in determining its Coxeter faces, and classifying them into arithmetic and properly quasi-arithmetic ones. 

\section{Proof of Theorem~\ref{th-face}}\label{sec:th1}

\subsection{Auxiliary results}\label{aux} Below we formulate a few auxiliary lemmas. Their main point is that the properties \textbf{(V1)}--\textbf{(V2)} are inherited by facets of an acute-angled quasi-arithmetic polytope. 

\medskip

In \textbf{\S}~\ref{aux} -- \textbf{\S}~\ref{qa}, $P$ always denotes a finite volume acute-angled polytope in $\HH^n$.

\begin{lemm}\label{lemma:projections}
Let $P' \subset H_{e_0}$ be a facet of $P$,  bounded by the respective hyperplanes $H_{e_1}, \ldots, H_{e_k}$ of $P$ (i.e. $H_{e_0} \cap H_{e_j}$ are the supporting hyperplanes of $P'$ in $\mathbb{H}^{n-1}$). Let $G(P) = \{ g_{ij} \}$ be the Gram matrix of $P$. Then the Gram matrix $G(P') = \{ g'_{ij} \}$ of $P'$ has entries 
$$g'_{ij} = \frac{g_{ij} - g_{0i}\, g_{0j}}{\sqrt{(1-g^2_{0i})\,(1-g^2_{0j})}}.$$
\end{lemm}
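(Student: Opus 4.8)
The statement is essentially a computation in the hyperboloid model once one knows what the Gram matrix of a face is. The key idea is that the face $P' \subset H_{e_0}$ lives in the hyperbolic hyperplane $H_{e_0} \cong \HH^{n-1}$, and the supporting hyperplanes of $P'$ inside $H_{e_0}$ are exactly the intersections $H_{e_0} \cap H_{e_j}$ for $1 \le j \le k$. So the normal vectors to these hyperplanes, taken \emph{inside} the Minkowski subspace $e_0^\perp$ (which carries the induced form of signature $(n-1,1)$), are the orthogonal projections of the $e_j$ onto $e_0^\perp$. The entire proof is: project, renormalise, compute inner products.

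\begin{proof}
Work in the hyperboloid model $\HH^n \subset \E^{n,1}$ with the Lorentzian form $(\cdot,\cdot)$, and recall that each facet $H_{e_j}$ of $P$ has a unit spacelike outer normal $e_j$, so $(e_j,e_j)=1$ and $g_{ij} = (e_i,e_j)$. Since $P' \subset H_{e_0}$, the hyperplane $H_{e_0}$ is a copy of $\HH^{n-1}$ sitting inside the Minkowski subspace $W := e_0^\perp \subset \E^{n,1}$, and the restriction of $(\cdot,\cdot)$ to $W$ has signature $(n-1,1)$ because $(e_0,e_0)=1>0$. The supporting hyperplanes of $P'$ in $H_{e_0}$ are $H_{e_0}\cap H_{e_j}$ for $j=1,\dots,k$; the outer normal to $H_{e_0}\cap H_{e_j}$ \emph{within} $W$ is the orthogonal projection
$$
f_j := e_j - \frac{(e_j,e_0)}{(e_0,e_0)}\, e_0 = e_j - g_{0j}\, e_0,
$$
which indeed lies in $W$ and defines the same half-space intersected with $H_{e_0}$. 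This $f_j$ is spacelike: $(f_j,f_j) = (e_j,e_j) - 2g_{0j}^2 + g_{0j}^2 = 1 - g_{0j}^2$, and $1-g_{0j}^2 > 0$ since $g_{0j}^2 < 1$ (the hyperplanes $H_{e_0}$ and $H_{e_j}$ of the acute-angled polytope $P$ actually intersect in $\HH^n$, forming a genuine dihedral angle, so $|(e_0,e_j)|<1$). Hence the unit outer normal to the $j$-th supporting hyperplane of $P'$ is $\widehat f_j := f_j/\sqrt{1-g_{0j}^2}$.

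It remains to compute the entries $g'_{ij} = (\widehat f_i,\widehat f_j)$ of the Gram matrix of $P'$. First,
$$
(f_i,f_j) = (e_i - g_{0i}e_0,\; e_j - g_{0j}e_0) = g_{ij} - g_{0j}g_{0i} - g_{0i}g_{0j} + g_{0i}g_{0j} = g_{ij} - g_{0i}g_{0j}.
$$
Dividing by the product of norms,
$$
g'_{ij} = \frac{(f_i,f_j)}{\sqrt{(f_i,f_i)}\sqrt{(f_j,f_j)}} = \frac{g_{ij} - g_{0i}\,g_{0j}}{\sqrt{(1-g_{0i}^2)(1-g_{0j}^2)}},
$$
which is the claimed formula. (For $i=j$ this correctly gives $g'_{ii}=1$.)
\end{proof}

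Remark on the structure: there is no real obstacle here — the one point requiring a word of justification is that $1-g_{0j}^2>0$, i.e. that each facet of $P$ meeting $H_{e_0}$ actually intersects it transversally in $\HH^n$ rather than being ultraparallel or parallel to it; this is automatic because $H_{e_0}\cap H_{e_j}$ is assumed to be a supporting hyperplane of the genuine $(n-1)$-dimensional face $P'$, so the two hyperplanes do intersect inside $\HH^n$ and their normals satisfy the strict inequality $|g_{0j}|<1$.
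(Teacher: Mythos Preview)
Your proof is correct and follows the same approach as the paper: project the normals $e_j$ onto $e_0^\perp$, compute their pairwise inner products, and normalise. Your write-up is more detailed than the paper's (in particular you justify $1-g_{0j}^2>0$ and spell out the signature of the subspace), but the argument is essentially identical.
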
 
\begin{proof}
Let $e^0_i$, $e^0_j$ be the projections of $e_i$, $e_j$ onto the hyperplane $H_{e_0}$. Then we obtain 
$e^0_j = e_j - (e_j,e_0)e_0$ and
$(e^0_i, e^0_j) = (e_i, e_j) - (e_0, e_i)(e_0, e_j)$. 
Therefore, the lemma follows by setting $e'_i = \frac{e^0_i}{\| e^0_i \|}$, $e'_j = \frac{e^0_j}{\| e^0_j \|}$ to be the respective unit normals of $P'$. 
\end{proof}

\begin{lemm}\label{lemma:Gram}
In the notation of Lemma~\ref{lemma:projections} and its proof, let $G'_s = G(e'_1, \ldots, e'_s)$ be a principal submatrix of $G(P')$, and let $G_s = G(e_0, e_1, \ldots, e_s)$ be the respective principal submatrix of $G(P)$. Then $\det (G'_s)^\sigma \geq 0$ if and only if $\det (G_s)^\sigma \geq 0$, for any Galois embedding $\sigma: \widetilde{\F} \to \R$.
\end{lemm}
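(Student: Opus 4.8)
The plan is to relate the two determinants by a rank-one perturbation argument, using the explicit formula from Lemma~\ref{lemma:projections}. First I would observe that the entries $g'_{ij}$ of the Gram matrix $G(P')$ are, up to positive rescaling of rows and columns, given by $g_{ij} - g_{0i}g_{0j} = (e^0_i, e^0_j)$, where $e^0_i = e_i - (e_i,e_0)e_0$ is the projection of $e_i$ onto $H_{e_0}$. Since rescaling the $i$-th row and the $i$-th column of a symmetric matrix by the same positive factor $1/\|e^0_i\|$ does not change the sign of any principal minor, it suffices to compare $\det (G_s)^\sigma$ with $\det (\widehat{G}'_s)^\sigma$, where $\widehat{G}'_s = \left( (e^0_i, e^0_j) \right)_{1 \le i,j \le s}$ is the Gram matrix of the projected (but not normalized) vectors.

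Next I would write $G_s = G(e_0, e_1, \ldots, e_s)$ as the Gram matrix of the vectors $e_0, e_1, \ldots, e_s$ and use the decomposition $e_i = e^0_i + g_{0i} e_0$ for $1 \le i \le s$, together with $(e_0,e_0) = 1$ and $(e^0_i, e_0) = 0$. In block form, with the first row/column indexed by $e_0$, this exhibits $G_s$ as $\begin{pmatrix} 1 & v^t \\ v & \widehat{G}'_s + v v^t \end{pmatrix}$, where $v = (g_{01}, \ldots, g_{0s})^t$. Applying the Schur complement with respect to the top-left $1 \times 1$ block gives $\det G_s = 1 \cdot \det\bigl( (\widehat{G}'_s + v v^t) - v \cdot 1^{-1} \cdot v^t \bigr) = \det \widehat{G}'_s$. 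This identity is polynomial in the entries $g_{ij}$, hence is preserved under any embedding $\sigma \colon \widetilde{\F} \to \R$: we get $\det (G_s)^\sigma = \det (\widehat{G}'_s)^\sigma$, and combining with the rescaling remark, $\det (G'_s)^\sigma$ and $\det (G_s)^\sigma$ differ by the positive factor $\prod_{i=1}^s \bigl((1 - g_{0i}^2)^\sigma\bigr)^{-1}$ — here I should check that $1 - g_{0i}^2 = \|e^0_i\|^2 > 0$ and that its image under $\sigma$ stays positive, which follows because under a non-identity embedding the ambient form becomes positive definite, so $\|e^0_i\|^{2\sigma} > 0$, while under the identity embedding $e^0_i$ is a nonzero spacelike vector. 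In particular the two determinants have the same sign, which is exactly the claim.

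The only genuinely delicate point is the positivity of the scaling factors $(1 - g_{0i}^2)^\sigma$ under every Galois embedding $\sigma$, since without it the equivalence of the sign conditions could fail. For the identity embedding this is geometric (the projected normal of a facet is a genuine nonzero vector tangent to $H_{e_0}$, hence spacelike in the Minkowski form). For a non-identity embedding $\sigma$, the form $G^\sigma$ is positive definite by hypothesis \textbf{(V1)}–\textbf{(V2)} applied to $P$ (or rather this is part of what we are allowed to assume about the setup; in the generality of Lemma~\ref{lemma:Gram} one works directly with the assumption that $\sigma$ is a Galois embedding of $\widetilde{\F}$ and the relevant vectors remain linearly independent), so all norms are positive. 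Everything else is the Schur-complement computation, which is purely formal and transfers verbatim through $\sigma$ because it is an identity of polynomials.
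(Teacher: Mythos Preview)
Your proof is correct and is essentially the same as the paper's: both establish the identity $\det G_s = \bigl(\prod_{i=1}^s \|e^0_i\|^2\bigr)\cdot \det G'_s$ and then argue that the factor is positive under every embedding. The paper obtains this by row-reducing $G_s$ to $\begin{pmatrix} 1 & * \\ 0 & G'_s \end{pmatrix}$, while you use the Schur complement of the $(0,0)$-block; these are the same linear-algebra move in different language. One small remark: you are more explicit than the paper about why $(1 - g_{0i}^2)^\sigma > 0$, and you are right that this is where quasi-arithmeticity is needed for non-identity $\sigma$; the paper's proof of the lemma actually glosses over this point and only supplies it later, in the proof of Proposition~\ref{quasi-facet:acute}.
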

\begin{proof}
Let us consider the matrix $G_s$. One can perform the following transformation on its rows: the $j$-th row ($1 \leq j \leq s$) is replaced by the difference of itself and the row corresponding to $e_0$ multiplied by $g_{j 0}$.

After that, each $i$-th column ($1 \leq i \leq s$) of the resulting matrix is divided by $\| e^0_i \| = \sqrt{1 - g^2_{0i}}$, and each $j$-th row ($1 \leq j \leq s$) is divided by $\| e^0_j \| = \sqrt{1 - g^2_{0j}}$. 

According to Lemma~\ref{lemma:projections}, this transformation results in the matrix 
$$
G''_s = 
\begin{pmatrix}
1 & * \\
0 & G'_s(e'_1,\ldots, e'_s)
\end{pmatrix},
$$
and thus we have $\det G_s = \kappa^2 \cdot \det G''_s = \kappa^2 \cdot \det G'_s$, where 
$$
\kappa^{-2} = \prod_{i,j=1}^s \|e^0_i\| \cdot \|e^0_j\| = \prod_{i=1}^s \|e^0_i\|^2.
$$
The above equality holds under all Galois embeddings, and the lemma follows. 
\end{proof}

\begin{lemm}\label{lemma:field-inclusion}
In the notation of Lemma \ref{lemma:projections}, if $P$ is quasi-arithmetic then $\F(P') = \F(P)$. 
\end{lemm}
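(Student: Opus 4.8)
\emph{Plan.} The plan is to prove the two inclusions $\F(P')\subseteq\F(P)$ and $\F(P)\subseteq\F(P')$ separately. The first holds for every acute-angled $P$ and is a direct computation with cyclic products; the second is the substantive one, and the only point at which quasi-arithmeticity — concretely, condition \textbf{(V2)} — enters. (Recall that $1-g_{0i}^2\neq 0$ for $i=1,\dots,k$, since $H_{e_0}$ and $H_{e_i}$ bound the face $P'$ and hence meet at a proper dihedral angle; this makes the denominators below legitimate.)

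\emph{The easy inclusion.} For $\F(P')\subseteq\F(P)$, since $\F(P')=\Q(\Cyc(G(P')))$, it is enough to show that every cyclic product of $G(P')$ lies in $\F(P)$. By Lemma~\ref{lemma:projections},
$$
g'_{i_1i_2}g'_{i_2i_3}\cdots g'_{i_mi_1}=\frac{\displaystyle\prod_{j=1}^{m}\bigl(g_{i_ji_{j+1}}-g_{0i_j}g_{0i_{j+1}}\bigr)}{\displaystyle\prod_{j=1}^{m}\bigl(1-g_{0i_j}^{2}\bigr)}
$$
(indices read cyclically), because each factor $\|e^0_{i_j}\|=\sqrt{1-g_{0i_j}^2}$ occurs exactly twice in the denominator. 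The denominator lies in $\F(P)$, since $1-g_{0i}^2=1-g_{0i}g_{i0}$. Expanding the numerator, every monomial has the form $\pm\prod_{j\in T}g_{0i_j}g_{0i_{j+1}}\cdot\prod_{j\notin T}g_{i_ji_{j+1}}$ for a set $T$ of positions, and this is the product of the entries of $G(P)$ read along the closed walk obtained from the cycle $i_1\to\cdots\to i_m\to i_1$ by replacing each step $i_j\to i_{j+1}$ with $j\in T$ by the detour $i_j\to 0\to i_{j+1}$; cutting that walk at all of its visits to the vertex $0$ and decomposing the resulting pieces into simple cycles shows that the monomial is a product of cyclic products of $G(P)$. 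Hence the whole expression lies in $\F(P)$, and $\F(P')\subseteq\F(P)$.

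\emph{The hard inclusion.} For $\F(P)\subseteq\F(P')$ I would argue by contradiction. One ingredient is the elementary fact that the determinant of a symmetric matrix with all diagonal entries equal to $1$ is an integer polynomial in the cyclic products of its entries (expand over permutations and decompose each permutation into disjoint cycles); applied to the principal submatrices $G'_s=G(e'_1,\dots,e'_s)$ and $G_s=G(e_0,e_1,\dots,e_s)$ of Lemma~\ref{lemma:Gram}, this gives $\det G'_s\in\F(P')$ and $\det G_s\in\F(P)$. The other ingredient is that $G(P')$, being the Gram matrix of a finite-volume polytope in $\HH^{n-1}$, has signature $(n-1,1)$ and so is \emph{not} positive semi-definite; hence, after relabelling the bounding hyperplanes of $P'$, we may fix $s$ with $\det G'_s<0$. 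Now assume $\F(P')\subsetneq\F(P)$. Since $\tF=\tF(P)$ is totally real by \textbf{(V1)}, so is its Galois closure $E/\Q$, and the Galois correspondence yields $\sigma\in\mathrm{Gal}(E/\F(P'))$ with $\sigma|_{\F(P)}\neq\id$; thus $\sigma|_\tF$ is a real embedding that fixes $\F(P')$ pointwise but not $\F(P)$. By \textbf{(V2)}, $G^\sigma(P)$ is positive semi-definite, so $\det(G_s^\sigma)\ge 0$ and $(g_{0i}^\sigma)^2\le 1$, while also $(g_{0i}^\sigma)^2\neq 1$ because $\sigma$ is injective and $g_{0i}^2\neq 1$. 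Applying $\sigma$ to the identity $\det G_s=\kappa^2\det G'_s$ from the proof of Lemma~\ref{lemma:Gram}, where $\kappa^2=\prod_{i=1}^{s}(1-g_{0i}^2)^{-1}\in\F(P)$, and using that $\sigma$ fixes $\det G'_s\in\F(P')$, one obtains
$$
\det(G_s^\sigma)=\sigma(\kappa^2)\,\det G'_s=\Bigl(\textstyle\prod_{i=1}^{s}\bigl(1-(g_{0i}^\sigma)^2\bigr)^{-1}\Bigr)\det G'_s<0,
$$
since the product is strictly positive and $\det G'_s<0$ — contradicting $\det(G_s^\sigma)\ge 0$. Therefore $\F(P)\subseteq\F(P')$, and with the easy inclusion, $\F(P')=\F(P)$.

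The main obstacle is the hard inclusion: a priori a face could carry a strictly smaller ground field, and it is precisely positive semidefiniteness of all non-identity Galois conjugates of $G(P)$ — condition \textbf{(V2)} — that prevents this, once paired with the determinantal relation between $G(P')$ and $G(P)$ from Lemma~\ref{lemma:Gram}. Iterating the statement along a chain of facets then yields Corollary~\ref{cor}.
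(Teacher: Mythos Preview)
Your proof is correct and follows essentially the same route as the paper's: the inclusion $\F(P')\subseteq\F(P)$ by expanding cyclic products of $G(P')$ into cyclic products of $G(P)$, and the reverse inclusion by contradiction via an embedding $\sigma$ fixing $\F(P')$ but not $\F(P)$, combined with the determinantal relation $\det G_s=\kappa^2\det G'_s$ from Lemma~\ref{lemma:Gram} and condition \textbf{(V2)}. Your version is in fact more carefully stated than the paper's: you isolate a single principal submatrix with $\det G'_s<0$ and use that $\det G'_s\in\F(P')$ is fixed by $\sigma$, whereas the paper writes ``$G^\sigma(P')=G(P')$'', which is not literally true (the individual entries of $G(P')$ lie only in $\tF(P')$; it is the cyclic products, and hence the principal minors, that lie in $\F(P')$).
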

\begin{proof}
Let $\F = \Q(\Cyc(G(P)))$ and $\F' = \Q(\Cyc(G(P')))$. We shall consider a cyclic product
\begin{equation}\label{eq:cycl}
g'_{i_1 i_2} g'_{i_2 i_3} \ldots g'_{i_s i_1} =
\frac{g_{i_1 i_2} - g_{0 i_1}\, g_{0 i_2}}{\sqrt{(1-g^2_{0 i_1})\,(1-g^2_{0 i_2})}}
\frac{g_{i_2 i_3} - g_{0 i_2}\, g_{0 i_3}}{\sqrt{(1-g^2_{0 i_2})\,(1-g^2_{0 i_3})}}
\ldots
\frac{g_{i_s i_1} - g_{0 i_s}\, g_{0 i_1}}{\sqrt{(1-g^2_{0 i_s})\,(1-g^2_{0 i_1})}}.
\end{equation}
The denominator of the above expression equals
\begin{equation*}
(1-g^2_{0 i_1})(1-g^2_{0 i_2}) \ldots (1-g^2_{0 i_s}) \in \F.
\end{equation*}
Thus, it remains to consider the numerator
\begin{equation}\label{eq:cycl3}
(g_{i_1 i_2} - g_{0 i_1}\, g_{0 i_2})
(g_{i_2 i_3} - g_{0 i_2}\, g_{0 i_3})\ldots
(g_{i_s i_1} - g_{0 i_s}\, g_{0 i_1}).
\end{equation}
While expanding the above expression, we take from each pair of parentheses either $g_{i_m i_{m+1}}$ or $g_{0 i_m } g_{0 i_{m+1}}$, and obtain a sum of cyclic products where each term looks like $g_{i_1 i_2} g_{i_2 i_3} \ldots g_{i_s i_1}$ with some terms of the form $g_{i_m i_{m+1}}$ being replaced by the respective products $g_{0 i_m } g_{0 i_{m+1}}$. 

This is equivalent to replacing some transpositions $(i_m, i_{m+1})$ in $(i_1, i_2)(i_2, i_3) \ldots (i_{s-1}, i_s) = (i_1, i_2, \ldots,$ $i_s)$, with $(0, i_m)(0, i_{m+1})$. Obviously, this operation creates another permutation with a different cycle structure, where either new non-trivial cycles of the elements in $I = \{0, i_1, i_2, \ldots, i_s\}$ will be formed, or some fixed points appear. The latter happens whenever a square of some $g_{ij}$, $i,j \in I$, is present. Thus, each term in \eqref{eq:cycl3} after expansion is a cyclic product from $\Cyc(G(P))$. 

Therefore, a cyclic product of the form (\ref{eq:cycl}) is a  linear combination of cyclic products of $G(P)$ divided by some elements of the field $\F$. This means $g'_{i_1 i_2} g'_{i_2 i_3} \ldots g'_{i_s i_1} \in \F$, and hence $\F' \subset \F$.

Now, let us suppose that $\F' \neq \F$. This implies that there exists a non-identity embedding $\sigma: \F \to \R$ such that $\sigma|_{\F'} = \id$. Then $G^\sigma(P)$ is positive semi-definite, and so is $G^\sigma(P')$, by Lemma~\ref{lemma:Gram}. However, the latter is impossible, since $G^\sigma(P') = G(P')$ is the Gram matrix of a hyperbolic polytope. 
\end{proof}

\subsection{Quasi-arithmeticity of a facet of a quasi-arithmetic acute-angled polytope}\label{qa}

\begin{prop}\label{quasi-facet:acute}
Let $P$ be a quasi-arithmetic acute-angled polytope in $\HH^n$ with ground field $\F = \F(P)$, and let $P'$ be its facet. Then $P'$ is also a quasi-arithmetic acute-angled polytope with the same ground field $\F' = \F(P') = \F$.
\end{prop}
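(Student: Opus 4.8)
The plan is to verify directly that $P'$ satisfies the three ingredients packaged into Definition~\ref{def-quasi-arithm}, namely that the fields $\tF(P')$ and $\F(P') \subset \tF(P')$ are well-defined and satisfy \textbf{(V1)}--\textbf{(V2)}. Since $P$ is acute-angled, the projections $e^0_i = e_i - (e_i,e_0)e_0$ of the outward normals of the facets adjacent to $H_{e_0}$ onto $H_{e_0}$ form a system of normals for $P'$, and Lemma~\ref{lemma:projections} gives the entries of $G(P')$ explicitly as rational functions of the entries of $G(P)$ involving only square roots of the quantities $1-g_{0i}^2$. First I would note that this already shows $\tF(P') \subset \tF(P')'$ for a suitable finite extension — more precisely, each $g'_{ij}$ lies in $\tF(P)\bigl(\sqrt{1-g_{0i}^2} : i\bigr)$, which is a finite, totally real extension of $\tF(P)$ (totally real because $1-g_{0i}^2$ is positive under every Galois embedding of $\tF(P)$, as its image $1-(g_{0i}^\sigma)^2$ is the cosine-type entry of the positive semi-definite matrix $G^\sigma(P)$, forcing $|g_{0i}^\sigma| \le 1$). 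Hence the field generated by the $g'_{ij}$ is totally real, giving \textbf{(V1)} for $P'$; strictly speaking one should observe that $\F(P') = \F(P)$ is genuinely totally real by Lemma~\ref{lemma:field-inclusion}, and that $\tF(P')$ is totally real since it sits inside the totally real field just described.

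Next I would establish \textbf{(V2)} for $P'$. Let $\sigma\colon \tF(P') \to \R$ be any Galois embedding with $\sigma|_{\F(P')} \ne \id$. By Lemma~\ref{lemma:field-inclusion} we have $\F(P') = \F(P)$, so $\sigma$ restricted to $\F(P)$ is non-trivial. One extends $\sigma$ (or rather chooses a compatible embedding) to the compositum $\tF(P)\bigl(\sqrt{1-g_{0i}^2}\bigr)$; this is where a small amount of care is needed, since $\tF(P')$ need not contain $\tF(P)$, but the entries of $G(P')$ are algebraic over $\tF(P)$ and one can work inside a common normal closure, tracking that $\sigma$ moves $\F(P)$ non-trivially. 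Then $G^\sigma(P)$ is positive semi-definite by quasi-arithmeticity of $P$, and I invoke Lemma~\ref{lemma:Gram}: for every principal submatrix $G'_s$ of $G(P')$ and the corresponding $G_s = G(e_0, e_1, \ldots, e_s)$ of $G(P)$, one has $\det(G'_s)^\sigma \ge 0$ iff $\det(G_s)^\sigma \ge 0$, and the latter holds because $G^\sigma(P)$ is positive semi-definite. Since all leading principal minors of $G^\sigma(P')$ are non-negative — and in fact this must be checked for all principal submatrices, but Lemma~\ref{lemma:Gram} is stated for an arbitrary principal submatrix obtained by listing a subset of indices, so it applies — we conclude $G^\sigma(P')$ is positive semi-definite, which is \textbf{(V2)} for $P'$.

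Finally, I would record that $P'$ is itself a finite-volume acute-angled polytope in $\HH^{n-1}$: acute-angledness is immediate because the dihedral angle between the facets of $P'$ cut out by $H_{e_0}\cap H_{e_i}$ and $H_{e_0}\cap H_{e_j}$ equals the dihedral angle between $H_{e_i}$ and $H_{e_j}$ along their common codimension-$2$ face in $P$ whenever they meet $H_{e_0}$, hence is $\le \pi/2$; and finite volume of the facet $P'$ of a finite-volume acute-angled polytope is standard. Combining this with \textbf{(V1)}--\textbf{(V2)} just verified, and with $\F(P') = \F(P)$ from Lemma~\ref{lemma:field-inclusion}, gives exactly the statement of the proposition.

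The main obstacle I anticipate is the bookkeeping with the embeddings in the second paragraph: one must be careful that $\tF(P')$ and $\tF(P)$ are not a priori comparable as subfields of $\R$, so the clean phrase ``$G^\sigma(P)$ is positive semi-definite, therefore so is $G^\sigma(P')$'' requires first passing to a common field (a normal closure containing both, and the square roots $\sqrt{1-g_{0i}^2}$) and checking that a non-identity embedding of $\F(P') = \F(P)$ does extend to a non-identity embedding there to which both the hypothesis on $P$ and Lemma~\ref{lemma:Gram} apply. Once that is set up, everything else is a direct appeal to Lemmas~\ref{lemma:projections}, \ref{lemma:Gram}, and \ref{lemma:field-inclusion}.
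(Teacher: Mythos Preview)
Your approach is essentially identical to the paper's: verify \textbf{(V1)} by embedding $\tF(P')$ into the totally real field $\tF(P)\bigl(\sqrt{1-g_{0i}^2}\bigr)$, verify \textbf{(V2)} by combining Lemma~\ref{lemma:Gram} with $\F(P')=\F(P)$ from Lemma~\ref{lemma:field-inclusion}, and you are in fact more careful than the paper about passing to a common overfield for the embedding bookkeeping. One slip to correct: your justification of acute-angledness is wrong --- the dihedral angle of $P'$ at $H_{e_0}\cap H_{e_i}\cap H_{e_j}$ is $\arccos(-g'_{ij})$, which by Lemma~\ref{lemma:projections} is \emph{not} equal to $\arccos(-g_{ij})$ unless $g_{0i}g_{0j}=0$; the paper simply invokes the standard fact that faces of acute-angled polytopes are acute-angled, and you should do the same.
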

\begin{proof}
It is well-known that a face of an acute-angled polytope is also acute-angled.  Now the proof follows from verifying conditions \textbf{(V1)}--\textbf{(V2)} of Vinberg's arithmeticity criterion (Theorem~\ref{V}).

\textit{Verification of \textbf{(V1)}.}
The field $\widetilde{\F'}$ equals $\widetilde{\F}(\{k^2_{ij}\}^k_{i,j=1})$, where $k^{-1}_{ij} = \|e^0_i\|\cdot \|e^0_j\|$, and thus is a finite extension of $\widetilde{\F}$. It remains to show that $\widetilde{\F'}$ is totally real. Instead, we prove that the larger field $\widetilde{\F''} = \widetilde{\F}(\{\sqrt{1 - g^2_{0i}}\}^k_{i=1})$ is totally real, so that $\widetilde{\F'} \subset \widetilde{\F''}$ is totally real, as well.

To this end, recall that $|g_{0i}| = \cos \angle(H_0, H_i) \le 1$, and thus $1 - g^2_{0i} \geq 0$. Also, $1 - g^2_{0i} = \det G(e_0, e_i)$ is a leading principal minor of $G(P)$ and thus remains positive for any non-identity embedding by Theorem~\ref{V}. Then $\widetilde{\F}(\sqrt{1 - g^2_{0i}})$ is totally real and, consequently, so is $\widetilde{\F''}$.

\textit{Verification of \textbf{(V2)}.} By Lemma~\ref{lemma:Gram} we have that, up to squares in $\F$, all the leading principal minors of $G' = G(P')$ coincide with the corresponding leading principal minors of $G = G(P)$. Since $\F' = \F$ by Lemma~\ref{lemma:field-inclusion}, we have that $\det G^\sigma_s \geq 0$ for every embedding $\sigma: \widetilde{\F} \to \R$, such that $\sigma|_{\F} \ne \id$, by Theorem~\ref{V}, and thus $\det (G')^\sigma_s \geq 0$. \end{proof}

\subsection{Proof of Theorem~\ref{th-face}}

Let $P$ be a quasi-arithmetic Coxeter polytope in $\HH^n$ with ground field $\F = \F(P)$, and let $P'$ be its face of any dimension $\ge 2$ that is itself a Coxeter polytope. Then we need to prove that $P'$ is also a quasi-arithmetic Coxeter polytope  with the same ground field $\F$.

Clearly, the face $P'$ can be included in the following chain of polytopes, by inclusion:
$$
P' = P_1 \subset P_2 \subset \ldots \subset P_t = P,
$$
where $P_j$ is a facet of $P_{j+1}$, for every $1 \leq j \leq t-1$.

Since $P = P_t$ is a quasi-arithmetic Coxeter polytope, then it is an acute-angled one and, by Proposition \ref{quasi-facet:acute}, $P_{t-1}$ is also a quasi-arithmetic acute-angled polytope
with the same ground field. Thus, clearly, each $P_j$ is a quasi-arithmetic acute-angled polytope with 
$\F(P_{j}) = \F(P_{j+1})$ for $1 \le j \le t-1$. \qed

\subsection{A Coxeter prism and its ground field}\label{example:prism}

The $3$-dimensional compact prism $P \subset \mathbb{H}^3$ with Coxeter diagram depicted in Figure~\ref{prism} has one of its bases $P'$ (namely, facet 1) orthogonal to all neighbours (namely, facets 3, 4, and 5). The Coxeter diagram of $P'$ is the sub-diagram in the diagram of $P$ spanned by vertices 3, 4, and 5. 

\begin{figure}[ht]
\centering
\begin{tikzpicture}
\draw[fill=black] 
(5.5,0) circle [radius=.1] node [above] {$1$} node [right] {\hspace{0.3in} $\cosh \ell = \frac{1}{2} \sqrt{5 + 3 \sqrt{2} + 2 \sqrt{5} + \sqrt{10}}$}
(3,0) circle [radius=.1] node [above] {$2$}
(0,-2) circle [radius=.1] node [below] {$3$}
(-3,0) circle [radius=.1] node [above] {$4$}
(0,2) circle [radius=.1] node [above] {$5$}
(3,0) --  (0,-2)
(0,-1.9) -- (-3,0.1)
(0,-2.1) -- (-3,-0.1)
(3,0) --  (0,2)
(0,2.1) -- (-3,0.1)
(0,2) -- (-3,0)
(0,1.9) -- (-3,-0.1)
;
\draw[dashed, very thick] (5.5,0) -- node[above] {$\ell$} (3,0);
\end{tikzpicture}
\caption{A prism in $\mathbb{H}^3$ with ground field $\Q(\sqrt{2}, \sqrt{5})$, one of whose facets has ground field $\Q(\sqrt{5})$}\label{prism}
\end{figure}
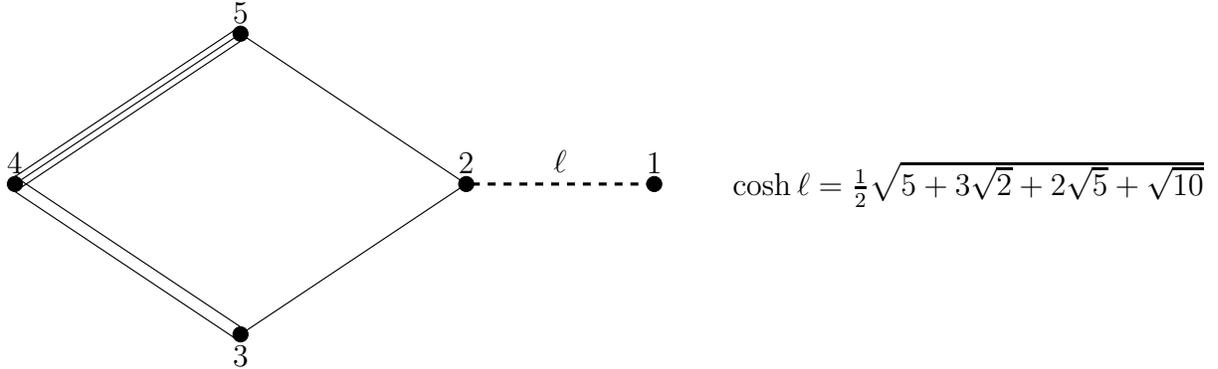

From the Coxeter diagram in Figure~\ref{prism}, one easily gets that $\F(P) = \Q(\sqrt{2}, \sqrt{5})$, while $\F(P') = \Q(\sqrt{5})$. Thus, $P$ cannot be quasi-arithmetic by Corollary~\ref{cor}. Another obstruction to quasi-arithmeticity is the fact that $\cosh^2 \ell$ is not totally positive. However, we do not need to make this computation in order to come up with our conclusion. 

\section{Proof of Theorem~\ref{th-facet-arithm}}
\label{sec:th2}

We start with an auxiliary lemma, that will become useful in the computations below.  

\subsection{Even algebraic integers.} Let $\rho_m = \sin^{-2} \frac{\pi}{2m}$, for $m\geq 2$. We claim that $\rho_m$ is \textit{even}, meaning that $\frac{\rho_m}{2}$ is an algebraic integer. 

\begin{lemm}\label{lemma:even_integer}
For all $m\geq 2$, we have that $\frac{\rho_m}{2}$ is an algebraic integer. 
\end{lemm}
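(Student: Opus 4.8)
The plan is to express $\rho_m=\sin^{-2}\frac{\pi}{2m}$ in terms of a root of unity and then study the minimal polynomial of $\rho_m/2$ over $\Z$. First I would write $\zeta=e^{i\pi/m}$, a primitive $2m$-th root of unity, so that
$$
\sin^2\frac{\pi}{2m}=\frac{1-\cos\frac{\pi}{m}}{2}=\frac{2-\zeta-\zeta^{-1}}{4},
$$
and hence
$$
\frac{\rho_m}{2}=\frac{2}{\,2-\zeta-\zeta^{-1}\,}=\frac{-2\zeta}{(\zeta-1)^2}.
$$
This already shows that $\rho_m/2$ lies in the real cyclotomic field $\Q(\zeta)^+=\Q(\cos\frac{\pi}{m})$, so it is an algebraic number; the content of the lemma is integrality.

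The key step is to show that the denominator $(\zeta-1)^2$ divides $-2\zeta$ in the ring of integers $\Z[\zeta]$ — equivalently, that $\frac{-2\zeta}{(\zeta-1)^2}$ is an algebraic integer. I would split into two cases according to the factorisation of $2m$. If $m$ is a power of $2$, say $m=2^{a}$ with $a\ge 1$, then $2m=2^{a+1}$ and $2$ is totally ramified in $\Q(\zeta)$: one has $(2)=(\zeta-1)^{\varphi(2m)}$ up to units, and more precisely $\frac{2}{(\zeta-1)^{2}}$ is a unit times a power of $(\zeta-1)$ with nonnegative exponent (since $\varphi(2^{a+1})=2^{a}\ge 2$ for $a\ge 1$), so the quotient is an algebraic integer. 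If $m$ has an odd prime factor, then $2m$ is not a prime power, so $1-\zeta$ is a \emph{unit} in $\Z[\zeta]$ (standard fact: $1-\zeta_N$ is a unit whenever $N$ is not a prime power), and therefore $(\zeta-1)^{2}$ is a unit and the quotient is visibly an algebraic integer. In either case $\rho_m/2\in\Z[\zeta]$, and since it is also real it lies in $\OOO_{\Q(\cos\pi/m)}$, proving the claim.

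Alternatively — and this may be cleaner to write — I would avoid cyclotomic ramification theory and instead exhibit $\rho_m/2$ as a root of a monic integer polynomial directly, using Chebyshev polynomials: $2\cos\frac{\pi}{m}$ is an algebraic integer (it is $\zeta+\zeta^{-1}$, a root of a monic integer polynomial, e.g. a factor of $2T_m(x/2)-2$ specialised appropriately), hence $x:=2-2\cos\frac{\pi}{m}=4\sin^2\frac{\pi}{2m}$ is an algebraic integer, and one checks it is a nonzero algebraic integer of norm dividing a power of $2$; then $\rho_m/2 = 2/x$, and one argues that $x\mid 2$ in $\OOO_{\Q(\cos\pi/m)}$ after inverting all odd primes. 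Concretely, I would verify that the algebraic integer $x=2-2\cos\frac{\pi}{m}$ satisfies a monic integer equation whose constant term is $\pm 2$ (when $m$ is a power of $2$) or $\pm 1$ (when $m$ has an odd prime factor, using that then $2-2\cos\frac{\pi}{m}$ is a unit), from which $2/x$ being an algebraic integer is immediate by the standard "reciprocal of an algebraic integer with unit-or-$2$-constant-term, divided into $2$" argument.

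The main obstacle is the case distinction and getting the $2$-adic valuation exactly right: when $m$ is a power of $2$ one must confirm that the exponent of $(\zeta-1)$ in the numerator (coming from the single factor $2$, which contributes $\varphi(2m)=m\ge 2$) genuinely exceeds the exponent $2$ in the denominator — this is where $m\ge 2$ is used, and it is the only place the hypothesis enters. Everything else (the trigonometric identity, the unit criterion for $1-\zeta_N$, the descent to the real subfield) is routine. I would present the root-of-unity computation as the spine of the argument and relegate the "$1-\zeta_N$ is a unit unless $N$ is a prime power" statement to a citation or a one-line reminder.
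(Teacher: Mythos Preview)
Your argument is correct. The trigonometric rewriting $\rho_m/2=-2\zeta/(\zeta-1)^2$ with $\zeta=e^{i\pi/m}$ a primitive $2m$-th root of unity is accurate, and the dichotomy is clean: when $2m$ is not a prime power, $1-\zeta$ is a unit in $\Z[\zeta]$ and integrality is immediate; when $m=2^{a}$ with $a\ge1$, the relation $(2)=(1-\zeta)^{\varphi(2^{a+1})}=(1-\zeta)^{2^{a}}$ with $2^{a}\ge 2$ shows that $(\zeta-1)^{2}$ divides $2$. The hypothesis $m\ge 2$ is used exactly where you say.

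This is, however, a genuinely different route from the paper's. The paper avoids cyclotomic ramification entirely and instead builds an explicit integer polynomial with constant term~$1$ that annihilates $\tau_m=\sqrt{2}\sin\frac{\pi}{2m}$, using products of Chebyshev polynomials $T_m(\pm z/\sqrt{2})$ or $U_m(\pm z/\sqrt{2})$ according to the parity of $m$; taking the reciprocal polynomial then exhibits $\tau_m^{-1}=\sqrt{\rho_m/2}$ as a root of a monic integer polynomial. The paper's case split is on $m$ even versus odd (a matter of which Chebyshev family behaves nicely), whereas yours is on whether $2m$ is a prime power (an arithmetic distinction). Your approach is shorter and more conceptual, but it relies on imported facts about units and ramification in cyclotomic fields; the paper's is entirely elementary and self-contained, needing only basic Chebyshev identities and the definition of an algebraic integer. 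Your ``alternative'' sketch via the minimal polynomial of $x=2-2\cos\frac{\pi}{m}$ is in spirit closer to the paper's strategy, though the paper carries it out by hand rather than by appealing to norm computations.
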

\begin{proof}
Let $p_m(z)$ be the following function:
$$
p_m(z) = \left\{ \begin{array}{cc}
    T_m\left( \frac{z}{\sqrt{2}} \right) \cdot T_m\left( -\frac{z}{\sqrt{2}} \right), & \text{if $m$ is even},  \\
    U_m\left( \frac{z}{\sqrt{2}} \right) \cdot U_m\left( -\frac{z}{\sqrt{2}} \right) + 1, & \text{if $m$ is odd},  \\
\end{array} \right.
$$
where $T_m$, resp. $U_m$, is the $m$-th Chebyshev polynomial of the $1$st, resp. $2$nd, kind. Their basic properties and some relations that will be used below are collected in \cite[Chapter 22]{AbrSte}. 
From the relation $T_{m}(z) = T_{m/2}(2z^2 - 1)$, for even $m$, we see that $T_m\left(\frac{z}{\sqrt{2}}\right)$ is a polynomial. From the recurrence $U_{m+1}(z) = 2z U_m(z) - U_{m-1}(z)$, with $U_0(z)=1$ and $U_1(z)=2z$, we get that $U_m(z)$ is a polynomial in $2z$. This, together with the fact that the odd powers of $\frac{z}{\sqrt{2}}$ in the above expression for $p(z)$ cancel out, means that $p(z)$ is a polynomial with integer coefficients. Moreover, $T_m(z)$ has constant term $\pm 1$ for even $m$, and $U_m(z)$ has constant term $0$ for odd $m$. Thus, $p_m(z)$ has constant term $1$. By using the trigonometric definitions of $T_m(z)$ and $U_m(z)$, we get that $\tau_m = \sqrt{2} \cdot \sin \frac{\pi}{2m}$ is a root of $p_m(z)$. 

Let $\widetilde{p}_m(z) = z^{\deg p_m} \cdot p_m(z^{-1})$ be the reciprocal of $p_m(z)$. Then $\widetilde{p}_m(z)$ is a unitary polynomial having $\tau^{-1}_m = \sqrt{\frac{\rho_m}{2}}$ among its roots. Hence, $\frac{\rho_m}{2}$ is an algebraic integer.
\end{proof}

\subsection{Proof of Theorem~\ref{th-facet-arithm}.}  Let $P'$ be a facet of an arithmetic Coxeter polytope with ground field $\F$. Let $H_{e_0}$ be the supporting hyperplane of $P'$ as a facet of $P$, and let $F_i$, $i=1, \ldots, s$, be the facets of $P'$. Let $H_{e_i}$, $i = 1, \ldots, s$ be the hyperplanes of $P$ such that $H_{e_i} \cap H_{e_0}$ is the supporting hyperplane of $F_i$ in $H_{e_0}$. Since the stabiliser of $H_{e_i} \cap H_{e_0}$ in the reflection group of $P$ is the dihedral group of order $4m$, and $H_{e_0}$ is one of its mirrors, there exists another mirror $H'_i$ (not necessarily a supporting hyperplane for $P$), such that $H'_i$ and $H_{e_0}$ are orthogonal, while $F_i \subset H'_i \cap H_{e_0}$. Thus, the facets of $P'$ come from orthogonal projections of some of the mirrors of the reflection group of $P$ onto the hyperplane $H_{e_0}$. Therefore $P'$ is a Coxeter polytope. 

By Theorem \ref{th-face}, $P'$ is quasi-arithmetic with ground field $\F$, and it remains to verify condition \textbf{(V3)} of Vinberg's arithmeticity criterion. 

Each cyclic product from $\Cyc(2 G(P'))$ has the form $2^s g'_{i_1 i_2} g'_{i_2 i_3} \ldots g'_{i_s i_1}$, which is similar to \eqref{eq:cycl}. Taking into account that the denominator in \eqref{eq:cycl} has each $g_{0i_k} = \cos \frac{\pi}{2m_{i_k}}$, it can be written as  
\begin{equation}\label{eq:cycle4}
2^s \cdot \prod^s_{k=1} \sin^{-2}\left( \frac{\pi}{2m_{i_k}} \right) \cdot (g_{i_1 i_2} - g_{0 i_1}\, g_{0 i_2})
(g_{i_2 i_3} - g_{0 i_2}\, g_{0 i_3})\ldots
(g_{i_s i_1} - g_{0 i_s}\, g_{0 i_1}).
\end{equation}

Same as in \eqref{eq:cycl3}, $g_{ij}$'s form some cyclic products from $\Cyc(G(P))$. Let $t$ be the cardinality of the set $I = \{i_k\, |\, m_{i_k} = 1\}$. Since $\rho_{m_{i_k}} = \sin^{-2} \left( \frac{\pi}{2m_{i_k}} \right) \in 2 \OOO_\F$ by Lemma~\ref{lemma:even_integer} for each $i_k \notin I$, we can rewrite \eqref{eq:cycle4} as
\begin{equation}\label{eq:cycle5}
2^{2 s-t} \cdot \rho \cdot (g_{i_1 i_2} - g_{0 i_1}\, g_{0 i_2})
(g_{i_2 i_3} - g_{0 i_2}\, g_{0 i_3})\ldots
(g_{i_s i_1} - g_{0 i_s}\, g_{0 i_1}),
\end{equation}
with $\rho \in \OOO_\F$.  Let $\mu = 2s - t$. The longest cyclic product that appears in the above expression has length $\lambda = 2s$, if $t=0$, or $\lambda = 2s-t-1$, if $t\geq 1$ (which happens whenever we have $g_{0i_j} = \ldots = g_{0i_{j+t-1}} = 0$ for $t$ consecutive terms). Thus, each term in \eqref{eq:cycle5} of length $\lambda$ is multiplied by $2^{\mu}$ with $\mu \geq \lambda$. Due
to the arithmeticity of $P$, each such product belongs to $\OOO_\F$ by Vinberg's criterion, cf. Theorem~\ref{V}.
 
\section{A polytope of Bugaenko}\label{section:t}

In this section we consider the polytope $P$ with Coxeter diagram depicted in Figure~\ref{polytope-compact}.  This is a compact polytope in $\mathbb{H}^7$ first described by Bugaenko \cite{Bugaenko}, and later on included into a larger census by Felikson and Tumarkin \cite{FT08}. This is an arithmetic polytope: $P$ is the fundamental polytope for $\mathcal{O}_r(L)$, where the lattice $L$ is associated with the quadratic form $q(x) = - \frac{1+\sqrt{5}}{2}\, x^2_0 + x^2_1 + \ldots + x^2_7$.

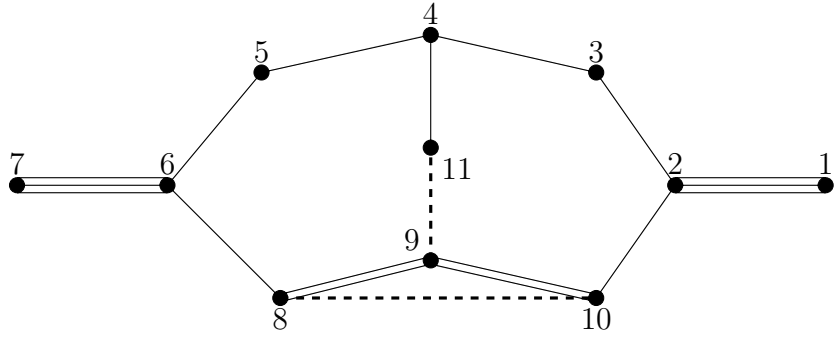
\begin{figure}[ht]
\centering
\begin{tikzpicture}
\draw[fill=black] 
(5*0.75, 5*0.7) circle [radius=.1] node [above] {$2$}
(5*0.54, 5*1.0) circle [radius=.1] node [above] {$3$}
(5*0.1, 5*1.1)  circle [radius=.1] node [above] {$4$}
(-0.35*5, 5*1.0) circle [radius=.1] node [above] {$5$}
(-0.6*5, 5*0.7) circle [radius=.1] node [above] {$6$}
(-0.3*5, 5*0.4) circle [radius=.1] node [below] {$8$}
(5*0.1, 5*0.5) circle [radius=.1] node [above left] {$9$}
(5*1.15, 5*0.7) circle [radius=.1] node [above] {$1$}
(-1*5, 5*0.7) circle [radius=.1] node [above] {$7$}
(5*0.1, 5*0.8) circle [radius=.1] node [below right] {$11$}
(5*0.54, 5*0.4) circle [radius=.1] node [below] {$10$}
(5*0.54, 5*0.4) -- (5*0.75, 5*0.7)
(5*0.75, 5*0.7) -- (5*0.54, 5*1.0)
(5*0.54, 5*1.0) -- (5*0.1, 5*1.1)
(5*0.1, 5*1.1) -- (-0.35*5, 5*1.0)
(-0.35*5, 5*1.0) -- (-0.6*5, 5*0.7)
(-0.6*5, 5*0.7) -- (-0.3*5, 5*0.4)
(5*0.1, 5*1.1) -- (5*0.1, 5*0.8)
(-0.3*5, 5*0.4+0.05) -- (5*0.1, 5*0.5+0.05)
(-0.3*5, 5*0.4-0.05) -- (5*0.1, 5*0.5-0.05)
(5*0.1, 5*0.5+0.05) -- (5*0.54, 5*0.4+0.05)
(5*0.1, 5*0.5-0.05) -- (5*0.54, 5*0.4-0.05)
(-0.6*5, 5*0.7-0.1) -- (-1*5, 5*0.7-0.1)
(-0.6*5, 5*0.7) -- (-1*5, 5*0.7)
(-0.6*5, 5*0.7+0.1) -- (-1*5, 5*0.7+0.1)
(5*0.75, 5*0.7-0.1) -- (5*1.15, 5*0.7-0.1)
(5*0.75, 5*0.7) -- (5*1.15, 5*0.7)
(5*0.75, 5*0.7+0.1) -- (5*1.15, 5*0.7+0.1)
;
\draw[dashed, very thick] (5*0.1, 5*0.5) -- (5*0.1, 5*0.8);
\draw[dashed, very thick] (-0.3*5, 5*0.4) -- (5*0.54, 5*0.4);
\end{tikzpicture}
\caption{A polytope $P$ with ground field $\F(P) = \Q(\sqrt{5})$ from \cite{Bugaenko}}\label{polytope-compact}
\end{figure}

The outer normals to the facets of $P$ can be easily computed by using 
\texttt{AlVin} \cite{Guglielmetti1, Guglielmetti2} and are included in the \texttt{PLoF}\footnote{\textbf{P}olytope's \textbf{Lo}wer-dimensional \textbf{F}aces} worksheet \cite{plof} created for \texttt{SageMath} \cite{sagemath}. 

The geometric characteristics of its lower-dimensional faces can be found by using \texttt{PLoF}. These include Coxeter diagrams and Gram matrices of all isometry types of faces from dimension $7$ down to $2$. 

Furthermore, \texttt{PLoF} builds the facet tree $\mathcal{T}(P)$ for $P$. Here, we would like to stress the fact that on each level of the tree only \textit{isometry types} of faces are given, and thus the adjacency structure of $P$ is not entirely revealed. Another feature is that each isometry type of non-Coxeter faces happens only once in $\mathcal{T}(P)$, since we prune the tree by removing duplicates. However, Coxeter faces are always preserved, so that their inclusion chains can be easily observed.

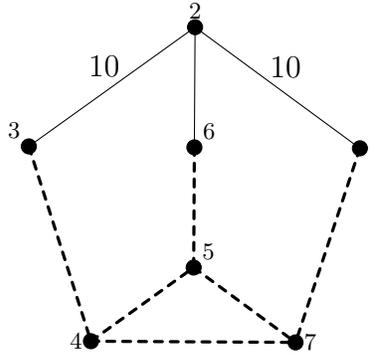
\begin{figure}[ht]
    \centering
    \begin{tikzpicture}[line cap=round,line join=round,>=triangle 45,x=1.0cm,y=1.0cm]
\draw [dashed, very thick] (-0.8*0.6301687854462221,0.8*1.5960492890907991)-- (0.8*1.0770546473922904,0.8*2.821417146526391);
\draw [dashed, very thick] (0.8*1.0770546473922904,0.8*2.821417146526391)-- (0.8*2.770009787007278,0.8*1.5764106682619587);
\draw [dashed, very thick] (-0.8*0.6301687854462221,0.8*1.5960492890907991)-- (0.8*2.770009787007278,0.8*1.5764106682619587);
\draw [dashed, very thick] (0.8*1.0885979390908462,0.8*4.819991966575433)-- (0.8*1.0770546473922904,0.8*2.821417146526391);
\draw [dashed, very thick] (-0.8*1.6622043099335837,0.8*4.835879944572051)-- (-0.8*0.6301687854462221,0.8*1.5960492890907991);
\draw (-0.8*1.6622043099335837,0.8*4.835879944572051)-- (0.8*1.1001412307894027,0.8*6.818566786624475) node [midway, above] {$10\,\,\,$};
\draw (0.8*1.1001412307894027,0.8*6.818566786624475)-- (0.8*1.0885979390908462,0.8*4.819991966575433);
\draw (0.8*1.1001412307894027,0.8*6.818566786624475) -- (0.8*3.839400188115276,0.8*4.804103988578815) node [midway, above] {$\,\,\,10$};
\draw [dashed, very thick] (0.8*3.839400188115276,0.8*4.804103988578815)-- (0.8*2.770009787007278,0.8*1.5764106682619587);
\begin{scriptsize}
\draw [fill=black] (-0.8*0.6301687854462221,0.8*1.5960492890907991) circle (.1)
node [left] {$4$};
\draw  [fill=black](0.8*2.770009787007278,0.8*1.5764106682619587) circle (.1)
node [right] {$7$};
\draw [fill=black] (0.8*3.839400188115276,0.8*4.804103988578815) circle (.1)
node [above right] {$1$};
\draw [fill=black] (0.8*1.1001412307894027,0.8*6.818566786624475) circle (.1)
node [above] {$2$}
;
\draw [fill=black] (-0.8*1.6622043099335837,0.8*4.835879944572051) circle (.1)
node [above left] {$3$};
\draw [fill=black] (0.8*1.0885979390908462,0.8*4.819991966575433) circle (.1)
node [above right] {$6$};
\draw [fill=black]  (0.8*1.0770546473922904,0.8*2.821417146526391) circle (.1) 
node [above right] {$5$};
\end{scriptsize}
\end{tikzpicture}
    \caption{A $3$-dimensional Coxeter face $P'$ of $P$. Here, $\F(P') = \Q(\sqrt{5})$}\label{fig:3dim}
\end{figure}

Let us remark that $P$ has properly quasi-arithmetic faces only in dimension $2$. One of them, the complete list being computed by \texttt{PLoF} \cite{plof}, belongs to a $3$-dimensional Coxeter face $P'$ with Coxeter diagram in Figure~\ref{fig:3dim}.  The subdiagram of $P$ giving rise to $P'$ is generated by the two triple bonds $1 - 2$ and $6 - 7$ in Figure~\ref{polytope-compact}. Then, the diagram of $P'$ can be easily computed by using \texttt{PLoF} \cite{plof}, or by applying the diagrammatic method of \cite[Theorem 2.2]{Allcock}. 

The face $P''_1$ in question has label $2$ in the Coxeter diagram of $P'$. This is a right-angled hexagon with Gram matrix
\begin{equation*}
\resizebox{0.95\linewidth}{!}{$\displaystyle{
G(P^{\prime\prime}_1) = \left( \begin{array}{cccccc}  1 & -2 \sqrt{5}-5 & 0 & 0 & -\sqrt{\frac{1}{3} \left(2 \sqrt{5}+5\right)} & -2 \sqrt{5}-4 \\  
-2 \sqrt{5}-5 & 1 & -2 \sqrt{5}-4 & 0 & -\sqrt{\frac{1}{3} \left(2 \sqrt{5}+5\right)} & 0 \\  
0 & -2 \sqrt{5}-4 & 1 & -\frac{1}{2} \sqrt{\sqrt{5}+5} & 0 & -\frac{1}{2} \left(3 \sqrt{5}+5\right) \\  
0 & 0 & -\frac{1}{2} \sqrt{\sqrt{5}+5} & 1 & -\frac{\sqrt{5}+1}{\sqrt{6}} & -\frac{1}{2} \sqrt{\sqrt{5}+5} \\  -\sqrt{\frac{1}{3} \left(2 \sqrt{5}+5\right)} & -\sqrt{\frac{1}{3} \left(2 \sqrt{5}+5\right)} & 0 & -\frac{\sqrt{5}+1}{\sqrt{6}} & 1 & 0 \\  
-2 \sqrt{5}-4 & 0 & -\frac{1}{2} \left(3 \sqrt{5}+5\right) & -\frac{1}{2} \sqrt{\sqrt{5}+5} & 0 & 1 \end{array} \right).}$}
\end{equation*}

One can easily verify that $P''_1$ is not arithmetic. Let us point out that $P''_1$ makes angles of $\frac{\pi}{2}$, $\frac{\pi}{3}$, and $\frac{\pi}{10}$ with its neighbours. Thus, the ``evenness of the dihedral angles'' condition in Theorem~\ref{th-facet-arithm} seems fairly reasonable.    

Another face $P''_2$ of $P'$ is labelled $1$ in Figure~\ref{fig:3dim}, and makes ``even'' angles of $\frac{\pi}{2}$ and $\frac{\pi}{10}$ with its neighbours. This face is therefore arithmetic by Theorem~\ref{th-facet-arithm}. This is a right-angled pentagon with Gram matrix
\begin{equation*}
\resizebox{0.95\linewidth}{!}{$\displaystyle{
G(P^{\prime\prime}_2) = \left( \begin{array}{ccccc}  1 & -\sqrt{2 \sqrt{5}+5} & 0 & 0 & -\frac{1}{2} \left(\sqrt{5}+1\right) \\  -\sqrt{2 \sqrt{5}+5} & 1 & -\frac{1}{2} \left(\sqrt{5}+3\right) & 0 & 0 \\  0 & -\frac{1}{2} \left(\sqrt{5}+3\right) & 1 & -\frac{1}{2} \sqrt{\sqrt{5}+5} & 0 \\  0 & 0 & -\frac{1}{2} \sqrt{\sqrt{5}+5} & 1 & -\frac{\sqrt{5}+1}{2 \sqrt{2}} \\  -\frac{1}{2} \left(\sqrt{5}+1\right) & 0 & 0 & -\frac{\sqrt{5}+1}{2 \sqrt{2}} & 1 \\ \end{array} \right).}$}
\end{equation*}

\begin{rem}
We also computed the faces of Bugaenko's compact polytope in $\HH^8$, which did not give very informative outcome. Indeed, it happens to have only few Coxeter faces, all of which are arithmetic. The complete computation can be found in \cite{plof}.
\end{rem}

\section{A curious reflective lattice}\label{section:l}

In this example, we consider the reflective lattice $L$ associated with the Lorentzian quadratic form $f(x) = -15\, x^2_0 + x^2_1 + \ldots + x^2_5$. Let $P$ be the fundamental polytope for $\mathcal{O}_r(L)$.  An interesting fact is that $P$ has a descending chain entirely of Coxeter faces starting from the polytope itself that ends up with two $2$-dimensional faces: one arithmetic, and the other properly quasi-arithmetic. None of the previous examples has such a long chain of Coxeter faces. This can be observed by using \texttt{PLoF} \cite{plof}, which also allows visualising all the respective Coxeter diagrams.

Namely, this chain ends in a $2$-dimensional face $P'_1$ with Gram matrix 
$$
G(P'_1) = \left(\begin{array}{rrr}
1 & -\frac{1}{2} \, \sqrt{2} & -1 \\
-\frac{1}{2} \, \sqrt{2} & 1 & 0 \\
-1 & 0 & 1
\end{array}\right),
$$
and another $2$-dimensional face $P'_2$ with Gram matrix 
$$
G(P'_2) = \left(\begin{array}{rrrr}
1 & -\frac{1}{2} \, \sqrt{2} & -\frac{2}{3} \, \sqrt{3} & 0 \\
-\frac{1}{2} \, \sqrt{2} & 1 & 0 & -\frac{1}{2} \, \sqrt{10} \\
-\frac{2}{3} \, \sqrt{3} & 0 & 1 & 0 \\
0 & -\frac{1}{2} \, \sqrt{10} & 0 & 1
\end{array}\right).
$$
It is easy to check that $P'_1$ is arithmetic, while $P'_2$ is properly quasi-arithmetic. 

Moreover, since $15$ is not a sum of three rational squares, $P$ has a descending chain of faces corresponding to the restrictions of $f$ onto the subspaces $x_5 = \ldots = x_{5-i} = 0$, for $i=0,1,2,3$, that starts with two non-compact finite volume faces and ends with two compact ones. All of them are obviously arithmetic (cf.~\cite[Theorem 2.1]{Bug92}). 

\section{Some open questions}\label{sec:open}

Below we list some, to the best of our knowledge, open problems, that seems interesting to us and are related to the above discussion of (quasi-)arithmeticity.

\begin{quest}\label{q1}
Do there exist compact Coxeter polytopes in $\HH^{\ge 9}$?
\end{quest}

Let us recall that the record example is due to Bugaenko in $\HH^8$, and there have been no compact polytopes found in higher dimensions since almost 30 years to date.

\begin{quest}\label{q2}
Do there exist non-arithmetic (in particular, properly quasi-arithmetic) Coxeter polytopes in $\HH^{\ge 6}$, compact or non-compact?
\end{quest}

Some related work was done by Vinberg in the non-compact case \cite{Vin14} for all dimensions $n \leq 12$ and $n = 14, 18$ by constructing analogues of the non-arithmetic hybrids due to Gromov and Pyatetski--Shapiro (later on, Thomson~\cite{Thom16} proved that these are never quasi-arithmetic). There are also some examples among Coxeter prisms, including properly quasi-arithmetic ones \cite{Vin67}. If more Coxeter polytopes become available, one can use the same idea and try to fill in the gaps for $n = 13, 15, 16, 17$, as well as to answer Question~\ref{q2} in full generality.

\begin{quest}\label{q3}
Is it true that a quasi-arithmetic Coxeter polytope in $\HH^{\ge 4}$ with ground field $\Q$ cannot be compact?
\end{quest} 

This question is motivated by the examples of cocompact quasi-arithmetic subgroups of $\SL_2(\Q)$ constructed by Vinberg in \cite{Vin-Bielefeld} that preserve an isotropic quadratic form.

If Question~\ref{q3} has affirmative answer, then Theorem \ref{th-face} limits us only to non-compact polytopes appearing as Coxeter $k$-dimensional faces, for $k \ge 4$, of arithmetic polytopes with ground field $\Q$.

\begin{quest}\label{q4}
Does there exist an arithmetic Coxeter polytope in $\HH^{\ge 4}$ that has a properly quasi-arithmetic Coxeter face of small codimension?
\end{quest}

The fact that the only properly quasi-arithmetic faces we found in Coxeter polytopes from \textbf{\S}~\ref{section:t} -- \textbf{\S}~\ref{section:l} have dimension $2$ is the main motivation for Question~\ref{q4}.  

\begin{quest}\label{q5}
Are there only finitely many maximal quasi-arithmetic hyperbolic reflection groups in all dimensions? Is it true at least for the fixed dimension and degree $d = [\F : \Q]$ of the ground field $\F$?
\end{quest}

This question is naturally motivated by the 
affirmative answer for arithmetic groups. It is 
also interesting, whether Nikulin's methods (cf. 
\cite{Nik81,Nik07}) or the spectral method
(cf.~\cite{ABSW2008,Bel16}).
can be applied.

\end{document}